\newtheorem{theorem}{Theorem}[section]
\newtheorem{lemma}{Lemma}[section]
\newtheorem{proposition}{Proposition}[section]
\newtheorem{remark}{Remark}[section]
\newcommand{\subgp}[1]{\langle{#1}\rangle}
\begin{document}
\title{Characterizing path-like trees from linear configurations}
\author{S. C. L\'opez}
\address{%
Departament de Matem\`{a}tiques\\
Universitat Polit\`{e}cnica de Catalunya\\
C/Esteve Terrades 5\\
08860 Castelldefels, Spain}
\email{susana.clara.lopez@upc.edu}

\author{F. A. Muntaner-Batle}
\address{Graph Theory and Applications Research Group \\
 School of Electrical Engineering and Computer Science\\
Faculty of Engineering and Built Environment\\
The University of Newcastle\\
NSW 2308
Australia}
\email{famb1es@yahoo.es}

\maketitle

\begin{abstract}
Assume that we embed the path $P_n$ as a subgraph of a $2$-dimensional grid, namely, $P_k \times P_l$. Given such an embedding, we consider the ordered set of subpaths
$L_1, L_2, \ldots , L_m$ which are maximal straight segments in the embedding, and such that
the end of $L_i$ is the beginning of $L_{i+1}$. Suppose that $L_i\cong P_2$, for some $i$ and that
some vertex $u$ of $L_{i-1}$ is at distance $1$ in the grid to a vertex $v$ of $L_{i+1}$. An elementary
transformation of the path consists in replacing the edge of $L_i$ by a new edge $uv$. A tree $T$ of order $n$ is said to be a path-like tree, when it can be obtained from some
embedding of $P_n$ in the $2$-dimensional grid, by a sequence of elementary transformations. Thus, the maximum degree of a path-like tree is at most $4$.

Intuitively speaking, a tree admits a linear configuration if it can be described by a sequence of paths in such a way that only vertices from two consecutive paths, which are at the same distance of the end vertices are adjacent.
In this paper, we characterize path-like trees of maximum degree $3$, with an even number of vertices of degree $3$, from linear configurations. We also show that the characterization of path-like trees of maximum degree $4$ can be reduced to the characterization of path-like tree of maximum degree $3$.\end{abstract}

\begin{quotation}
\noindent{\bf Key Words}: {path-like tree, normalized embedding, linear configuration}

\noindent{\bf 2010 Mathematics Subject Classification}:  Primary 05C05, 05C75,
   Secondary 05C70 and 05C78
\end{quotation}
\section{Introduction}
For the graph theory terminology and notation not defined in this paper, we refer the reader to one of the following
sources \cite{CH,G,W}.

Path-like trees were first introduced by Barrientos in \cite{BarThesis} as an alternative way to attack the well known graceful tree conjecture \cite{Rosa}. It turns out that strong connections of path-like trees with other well known labeling conjectures, as for instance, the harmonious tree conjecture introduced in 1980 by Graham  and Sloane \cite{GrSlo} or the super edge-magic conjecture by Enomoto et al. \cite{E} have also been found. Of course, due to its relation with graceful labelings, path-like trees have also relations with graph decomposition problems. See \cite{Barat, BHLMT, Botler, Thom} for recent advances on decompositions of graphs by trees.
 Path-like trees were defined as follows: assume that we embed the path $P_n$ as a subgraph of a $2$-dimensional grid, namely, $P_k \times P_l$. Given such an embedding, we consider the ordered set of subpaths
$L_1, L_2, \ldots , L_m$ which are maximal straight segments in the embedding, and such that
the end of $L_i$ is the beginning of $L_{i+1}$. Suppose that $L_i\cong P_2$, for some $i$ and that
some vertex $u$ of $L_{i-1}$ is at distance $1$ in the grid to a vertex $v$ of $L_{i+1}$. An {\it elementary
transformation} of the path consists in replacing the edge of $L_i$ by a new edge $uv$. We
say that a tree $T$ of order $n$ is a {\it path-like tree}, when it can be obtained from some
embedding of $P_n$ in the $2$-dimensional grid, by a sequence of elementary transformations.

Although Barrientos introduced path-like trees due to their labeling properties, it is also true that this set of trees is also of interest in its own right.
For instance, a very interesting problem in relation to this topic is to develop an algorithm to determine when a given tree is a path-like tree.
Ba\v{c}a et al. \cite{BaLinMun07} have even gone further in this question and asked for the time complexity of determining when a given
tree is a path-like tree (see also \cite{BaMi}).
However, up to this point, the number of results that allow us to determine when a given tree is a path-like tree is very limited.
In general, the results found in this direction deal with trees that are of very specific types.
For instance, in \cite{MunRiu08} we find characterizations of path-like trees with degree sequence $1,1,1,2,2,\ldots, 2,3$ and degree sequence
$1,1,1,1,2,2,\ldots, 2,4$. Also in the same paper, it was defined the concept of expandable tree, and there were provided different characterizations
for expandable trees. It turns out that every path-like tree is an expandable tree. Thus, if we are able to determine that a tree is not 
expandable, then such a tree cannot be a path-like tree. Using this fact, Muntaner-Batle and Rius-Font were able to obtain results of the following type.

\begin{theorem}\cite{MunRiu08}
Let $T$ be a path-like tree. Then, there are at most two vertices $u,v\in V(T)$ with:
\begin{itemize}
\item[(i)] The degree of $u$ and $v$ is $3$.
\item[(ii)] If the neighborhood of $u$ is $\{u_1,u_2,u_3\}$ and the neighborhood of $v$ is $\{v_1,v_2,v_3\}$ then deg$(u_i)=$deg$(v_i)=1$, for
$i=1,2$ and deg$(u_3)=$deg$(v_3)=2$.
\end{itemize}
\end{theorem}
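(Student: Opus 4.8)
The plan is to track the degree of every vertex through the folding process and to show that any vertex of degree $3$ carrying two pendant neighbours must be anchored at an endpoint of the embedded path $P_n$; since $P_n$ has only two endpoints, this forces the bound. First I would record the effect of a single elementary transformation: because every $L_i$ is a straight segment of the \emph{original} embedding, the edge deleted by a transformation is always an edge of $P_n$, whereas the edge $uv$ that is added is a grid edge that is never deleted afterwards. Consequently each transformation lowers the degrees of the two endpoints of $L_i$ by one and raises the degrees of the two pivots $u,v$ by one, and for every vertex $w$ one has $\deg_T(w)=\deg_{P_n}(w)-r(w)+s(w)$, where $r(w)$ counts the transformations in which $w$ is an endpoint of the folded segment and $s(w)$ counts those in which $w$ is a pivot. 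In particular $T$ is a subtree of the grid, and a leaf of $T$ is a vertex for which these quantities balance to $1$.

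The key step is the following local lemma: if $w$ has degree $3$ in $T$ and two of its neighbours are leaves, then at least one of those two leaves is an endpoint of $P_n$. To prove it I would use the degree formula above. A leaf that is not an endpoint of $P_n$ has lost at least one of its two original edges, and in the generic situation its surviving edge is an original edge joining it to a \emph{pivot} of the transformation that detached it. If both leaf-neighbours of $w$ were such created leaves, then $w$ would be a pivot in two distinct transformations, so $s(w)\ge 2$; the two added edges join $w$ to two further vertices, distinct from each other and from the two leaves because the tree is simple and the leaves are carried by original edges, whence $\deg_T(w)\ge 4$, a contradiction. Hence one of the two leaves is an endpoint of $P_n$.

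Granting the lemma, the conclusion is immediate and in fact slightly stronger than required: assign to each vertex satisfying (i)--(ii) one of its leaf-neighbours that is an endpoint of $P_n$. This assignment is injective, since an endpoint of $P_n$ is a leaf of $T$ and hence is adjacent to a unique vertex. As $P_n$ has exactly two endpoints, there are at most two such vertices, which is the assertion. Note that condition (ii) is not even used in this counting, only that $w$ has degree $3$ with two pendant neighbours.

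The step I expect to be the real obstacle is the word ``generic'' in the lemma. A vertex may play several roles: it can be detached by one transformation and be a pivot in another, and a created leaf could in principle survive through its \emph{added} edge rather than through an original one. This degeneracy occurs precisely when two consecutive short segments $L_i,L_{i+1}\cong P_2$ are folded about a common junction, and it is the only mechanism by which a degree-$3$ vertex could acquire two created leaves without reaching degree $4$. The bulk of the work is therefore to analyse, using the grid-adjacency constraint that governs which segments may be folded simultaneously, that such multiply-folded configurations either do not arise in a path-like tree or still do not produce a vertex satisfying (i)--(ii) with no endpoint among its leaves. Here condition (ii), pinning the third neighbour to degree exactly $2$, is what I would expect to dispose of the remaining cases; the expandability criterion recalled above offers an alternative route to the same exclusions.
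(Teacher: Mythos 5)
You should first note that the paper contains no proof of this statement at all: it is quoted from \cite{MunRiu08}, where it is deduced from the fact that every path-like tree is an expandable tree, so your direct degree-bookkeeping on the folding process is necessarily a different route. The skeleton of your argument is viable, but as written it has two genuine gaps. First, the justification of your key lemma rests on a false claim: a created leaf's surviving edge does \emph{not}, in general, join it to a pivot of the transformation that detached it. When $L_i\cong P_2$ is folded, the pivots $u\in L_{i-1}$, $v\in L_{i+1}$ may be \emph{any} vertically aligned pair in the overlap of the two parallel segments, arbitrarily far from the detached endpoint; so from ``both leaf-neighbours of $w$ are created leaves'' you cannot conclude $s(w)\ge 2$ the way you do. The conclusion can be rescued, but by a different argument: if both leaf-neighbours $x_1,x_2$ of $w$ are created leaves, then $x_1wx_2$ must be a maximal straight segment $L_k\cong P_3$ flanked by two folded segments $L_{k-1},L_{k+1}\cong P_2$, and by definition the $L_k$-side pivots of those two foldings lie in $\{x_1,w,x_2\}$. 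A pivot can never be an endpoint of the folded edge itself (the only vertex of the neighbouring segment at grid distance $1$ from that endpoint is the other endpoint, so the added edge would equal the removed one), and choosing $x_1$ or $x_2$ as pivot restores its degree to $2$, contradicting leafness; hence both pivots would have to be $w$, giving $\deg_T(w)\ge 4$. You need this replacement; your ``adjacent to a pivot'' step fails even generically.

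Second, the degenerate case you flag at the end --- a vertex losing both original edges because two consecutive segments $L_j,L_{j+1}\cong P_2$ are both folded, and surviving as a leaf through its \emph{added} edge --- is announced but not analysed, so the proposal is a plan rather than a proof precisely at the point you yourself identify as ``the bulk of the work.'' This case can in fact be excluded by a short geometric argument you do not supply: a nontrivial folding of $L_j$ forces $L_{j+1}$ to double back anti-parallel over $L_{j-1}$ (a staircase continuation leaves only the removed edge as a candidate, a no-op), and then a nontrivial folding of $L_{j+1}$ would force $L_{j+2}$ to run anti-parallel to $L_j$, whose first vertex coincides with a vertex of $L_{j-1}$ --- impossible, since the path is embedded (self-avoiding) in the grid. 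Thus two consecutive $P_2$-segments are never both folded, every created leaf keeps exactly one original edge and has $s=0$, and with the repaired lemma your endpoint-counting does give the theorem (indeed the stronger form not using condition (ii)). With these two repairs your argument would be correct, self-contained, and more elementary than the expandability route of \cite{MunRiu08}; without them, the central lemma is unproved.
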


It is worth mentioning that the previous theorem appeared first in \cite{BaLinMun07} and it was proven without the aid
of expandable trees. However, this original proof is harder to follow and less elegant than the proof provided in \cite{MunRiu08}.

The main goal of this paper is to provide a characterization that will allow us to determine when a tree of maximum degree $3$, with an even number of vertices of degree $3$ (and a small restriction) is a path-like tree, 
using divisibility conditions. We belief that this is a big break thru into the problem since it is an easy consequence of the definition
of path-like trees, that all path-like trees have maximum degree at most $4$. Furthermore, we also feel that similar techniques
 may lead to a complete characterization of other families of path-like trees.


Before concluding this introduction, we will introduce the notion of normalized embedding of a path-like tree, found in \cite{BaLinMun09}, since
it will be of help to better understand the coming material of the paper.

\subsection{Normalized embedding}
Let $\mathbb{L}$ be the $2$-dimensional lattice. If we fix a crossing point as $(0, 0)$ then each
crossing point in $\mathbb{L}$ is perfectly determined by an ordered pair $(i, j)$ where $i$ denotes
the column and $j$ denotes the row of $\mathbb{L}$. Consider an embedding of a path $P$ in
 $\mathbb{L}$ satisfying the following conditions:

 \begin{itemize}
   \item One end vertex of the path $P$ is $(0, 0)$.
   \item Each row of the embedding contains at least two vertices of the path $P$, and each
vertical subpath is in the embedding isomorphic to $P_2$.
   \item Assume that $j$ is an even integer and that $(i, j)$, $(i+1,j)$, $(i+ 2, j )$,\ldots, $(i+ s, j )$
is a maximal straight horizontal subpath in the embedding of the path $P$ in $\mathbb{L}$.
If $(m,j+1)$ belongs to the embedding of the path $P$ in $L$, then $m \le i+s$.
\item Assume that $j$ is an odd integer and that $(i, j)$, $(i-1,j)$, $(i- 2, j )$,\ldots, $(i- t, j )$
is a maximal straight horizontal subpath in the embedding of the path $P$ in $L$.
If $(m,j+1)$ belongs to the embedding of the path $P$ in $\mathbb{L}$, then $m \ge i-t$.
 \end{itemize}

Then, this embedding is called a {\it normalized embedding} of $P$ in the lattice $\mathbb{L}$.

In \cite{BaLinMun09}, it was proven the following result.
\begin{lemma}\cite{BaLinMun09}\label{lemma: every plt from norma embb}
Every path-like tree can be obtained from a normalized embedding of a path in the $2$-dimensional grid.
\end{lemma}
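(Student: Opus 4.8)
The plan is to start from an arbitrary embedding of $P_n$ in the lattice $\mathbb{L}$ that witnesses $T$ as a path-like tree, together with the record of which maximal segments $L_i$ were used in elementary transformations, and to convert this data, by a finite sequence of embedding-preserving moves, into a normalized embedding producing the same tree $T$. The whole argument rests on the observation that the isomorphism type of $T$ is determined only by the combinatorial pattern of rows, of the unit vertical links between consecutive rows, and of the rungs introduced by the transformations; hence we are free to redraw the path in the grid as long as this pattern, and the grid-distance-$1$ adjacencies it relies on, are respected.

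First I would put the embedding into \emph{row form}. After translating so that one end vertex lies at $(0,0)$, I would argue that any maximal vertical straight segment of length greater than $1$ can be replaced by a zig-zag that uses only unit vertical steps together with extra horizontal rows, without altering the abstract path $P_n$ nor any of the adjacencies used by the transformations; this enforces the second defining condition of a normalized embedding (each vertical subpath $\cong P_2$, each row containing at least two vertices). The more delicate point is to guarantee that the rows are traversed in a boustrophedon fashion: even rows left-to-right and odd rows right-to-left. Here I would use reflections of individual rows together with the freedom to relabel, showing that whenever a row is traversed in the wrong direction it can be flipped and reattached to its neighbours through a unit vertical edge.

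Next I would enforce the two turning conditions (the third and fourth bullets of the definition), which require that the vertical link joining an even row $j$ to row $j+1$ sit at or before the right end of the maximal horizontal segment of row $j$, and symmetrically for odd rows. The idea is to \emph{slide} each horizontal segment along its row until the link occupies the correct end, checking at each slide that the segment does not collide with the part of the path already placed and that the grid-distance-$1$ relations needed for the rungs are preserved (a rung between rows $j$ and $j+1$ survives because sliding both the segment in row $j$ and its continuation keeps the two endpoints vertically aligned). To see that this process terminates I would attach a potential function, for instance the number of rows in the wrong direction plus the total misplacement of the vertical links, and show that each move strictly decreases it.

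The main obstacle, I expect, is exactly the interaction between the last two steps: re-routing long vertical segments and sliding horizontal segments can, a priori, destroy the very grid-distance-$1$ adjacency between a vertex $u$ of $L_{i-1}$ and a vertex $v$ of $L_{i+1}$ on which an elementary transformation depends. The crux of the proof will therefore be a careful case analysis showing that every elementary transformation applied in the original embedding can be reproduced in the normalized one---possibly as a \emph{different} rung between the same pair of consecutive rows---so that, after re-applying the corresponding transformations, the normalized embedding yields a tree isomorphic to $T$. Once this is established, $T$ has been obtained from a normalized embedding, which is what the statement asserts.
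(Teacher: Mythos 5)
This lemma is stated in the paper as a citation to Ba\v{c}a, Lin and Muntaner-Batle \cite{BaLinMun09}; the paper contains no proof of it, so your proposal can only be judged on its own terms and against the constructive strategy the paper uses nearby (the forward direction of Theorem \ref{theo: plt iff a linear config. exists}, which builds a normalized embedding directly from a structural description of the tree). Measured that way, your proposal has a genuine gap, and you have in fact named it yourself: the entire difficulty of the lemma is showing that the surgery moves (zig-zagging long vertical runs, flipping rows, sliding horizontal segments) can be performed so that every elementary transformation of the original embedding can be re-applied in the new one with the resulting tree still isomorphic to $T$. You defer this to ``a careful case analysis'' that is never carried out, so what remains is a plan, not a proof. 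The deferral is not innocuous: each of your moves visibly threatens the isomorphism type. Flipping a row reverses the positions of all attachment vertices on it, so a rung that met the row at distance $d$ from one end now meets it at distance $d$ from the other end, and the tree obtained after re-applying the transformations is in general a different tree. Sliding a segment changes the distances from the rung endpoints to the ends of the path segments, which is exactly the data that determines $T$ (compare condition (\ref{eq: condi_union_consec_paths})); your parenthetical claim that sliding ``keeps the two endpoints vertically aligned'' only addresses the grid-distance-$1$ requirement, not the preservation of these distances, and it also ignores that the two ends of a slid segment are attached to \emph{different} neighbouring rows, so a segment cannot be slid independently. Likewise, replacing a vertical run of length greater than one by a zig-zag inserts new rows between $L_{i-1}$ and $L_{i+1}$ and can destroy the distance-$1$ adjacency $uv$ on which a transformation depends. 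Finally, your termination argument via a potential function presupposes that each move is valid (collision-free and tree-preserving), which is precisely what has not been established.

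It is also worth noting that the surgery-on-the-given-embedding route is probably the harder of the two available strategies. The approach consistent with how this paper operates (and, by all indications, with \cite{BaLinMun09}) is to forget the original embedding entirely: extract from $T$ its abstract structure as a sequence of paths in which consecutive paths are joined only by edges respecting the distance condition (\ref{eq: condi_union_consec_paths}), and then \emph{directly construct} a normalized embedding realizing that structure, placing the rows boustrophedon with $v_1^{i+1}$ above $v_{k_i}^i$ as in the proof of Theorem \ref{theo: plt iff a linear config. exists}. That construction sidesteps every one of the preservation problems above, because nothing from the original embedding needs to survive. If you want to salvage your approach, the missing case analysis must be supplied; if you want a feasible proof, reorganize around the direct construction.
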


In this paper,  we present a different way of introducing path-like trees, that is, using the notion of a linear configuration. Intuitively speaking, a tree admits a linear configuration if it can be described by a sequence of paths in such a way that only vertices from two consecutive paths, which are at the same distance of the end vertices are adjacent. The formal definition is given in Section \ref{section: A different way of introducing path-like trees}, where we also introduce the trees of type $H$.
In this paper, we characterize path-like trees of maximum degree $3$, with an even number of vertices of degree $3$, from linear configurations and decompositions into trees of type $H$. The main result of the paper is Theorem \ref{theo: a tree in H_n such that...Then path-like tree_good} in Section 3. The characterization of path-like tree of maximum degree $4$ can be reduced to the characterization of path-like trees of maximum degree $3$, as we show in Section \ref{section: degree 4}.
\section{A different way of introducing path-like trees}\label{section: A different way of introducing path-like trees}
Muntaner-Batle and Rius-Font introduced in \cite{MunRiu08} the concept of generalized path-like trees. We follow their idea in order to introduce a different way to understand path-like trees.
Assume that we have a set of paths $P^1_{k_1},P^2_{k_2}, \ldots, P^n_{k_n}$ with
$$V(P^i_{k_i})=\{v^i_1,v^i_2,\ldots, v^i_{k_i}\} \hbox{ and } E(P^i_{k_i})=\{v^i_1v^i_2,v^i_2v^i_3,\ldots, v^i_{k_i-1}v^i_{k_i}\},$$
for $i=1,2,\ldots,n$,
and assume that we embed the set of paths in a horizontal line as follows:
$$v^1_1v^1_2\ldots v^1_{k_1}\hspace{0.5cm} v^2_1v^2_2\ldots v^2_{k_2}\hspace{0.5cm}\ldots \hspace{0.5cm}v^n_1v^n_2\ldots v^n_{k_n}.$$
Let $T$ be any tree that can be obtained from this embedding by joining two consecutive paths in such a way that, for each $l=1,2,\ldots,n-1$,
\begin{equation}\label{eq: condi_union_consec_paths}
   \hbox{if } v^l_iv^{l+1}_j\in E(T) \hbox{ then } d_{T}(v^l_i,v^l_{k_l})=d_{T}(v^{l+1}_1,v^{l+1}_j) \hbox{ and } j>1.
\end{equation}

\begin{theorem}\label{theo: plt iff a linear config. exists}
A tree $T$ can be drawn in the way described above if and only if $T$ is a path-like tree.
\end{theorem}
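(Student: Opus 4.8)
The plan is to prove the equivalence by establishing a dictionary between the two constructions: on one side the ``linear configuration'' described via the horizontal embedding of paths $P^1_{k_1},\ldots,P^n_{k_n}$ together with the adjacency condition \eqref{eq: condi_union_consec_paths}, and on the other side the normalized embedding of a path in the $2$-dimensional lattice $\mathbb{L}$ from which a path-like tree is obtained. Thanks to Lemma \ref{lemma: every plt from norma embb}, I may assume that every path-like tree arises from a \emph{normalized} embedding, so it suffices to match linear configurations with normalized embeddings rather than with arbitrary embeddings; this is what will make both directions tractable.

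\medskip
\noindent\textbf{From a path-like tree to a linear configuration.}
First I would take a path-like tree $T$ and fix a normalized embedding of $P_n$ in $\mathbb{L}$ realizing it. The key observation is that in a normalized embedding each row $j$ of the lattice contains a horizontal subpath, and the rows are stacked so that consecutive rows $j$ and $j+1$ are joined by a single vertical edge ($\cong P_2$) or by a transformed edge $uv$. I would read off the path $P^{j}_{k_j}$ as the horizontal subpath lying in row $j$ (reindexed so that, because of the alternating left-to-right and right-to-left orientation forced by the parity conditions in the definition of normalized embedding, consecutive rows are traversed in a consistent ``snake'' direction). The elementary transformations, which replace a vertical $P_2$ edge by an edge $uv$ with $u\in L_{i-1}$, $v\in L_{i+1}$ at grid-distance $1$, translate precisely into an edge $v^l_i v^{l+1}_j$ between consecutive rows; the grid-distance-$1$ and position constraints of the normalized embedding are exactly what force the distance equality $d_T(v^l_i,v^l_{k_l})=d_T(v^{l+1}_1,v^{l+1}_j)$ and the condition $j>1$ in \eqref{eq: condi_union_consec_paths}. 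Verifying this correspondence carefully, row by row, gives a linear configuration for $T$.

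\medskip
\noindent\textbf{From a linear configuration to a path-like tree.}
Conversely, given a tree $T$ drawn as a linear configuration, I would place $P^l_{k_l}$ in row $l-1$ of $\mathbb{L}$, horizontally, orienting even rows left-to-right and odd rows right-to-left so as to satisfy the parity clauses of the normalized-embedding definition. The adjacency condition \eqref{eq: condi_union_consec_paths} guarantees that the unique edge joining consecutive paths connects vertices that sit in vertically adjacent lattice positions (distance $1$ in the grid), which is exactly the data produced by an elementary transformation from a vertical $P_2$. I would then check that the underlying object, before performing transformations, is a single embedded path $P_n$: the condition $j>1$ prevents the joining edge from meeting the first vertex of the next path, and the distance equality aligns the join so that traversing row $0$, then dropping to row $1$, traversing it in reverse, and so on, visits every vertex exactly once. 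Hence $T$ is obtained from a normalized embedding by elementary transformations and is path-like.

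\medskip
The main obstacle I anticipate is the bookkeeping in the correspondence between the \emph{distance} condition \eqref{eq: condi_union_consec_paths} and the \emph{geometric} distance-$1$/position conditions of the normalized embedding. One must show that the combinatorial distance equality $d_T(v^l_i,v^l_{k_l})=d_T(v^{l+1}_1,v^{l+1}_j)$ is equivalent to the two joined vertices occupying the same column (so that the connecting edge is vertical, of grid-length $1$), after accounting for the orientation reversal between even and odd rows. A second, more technical point is guaranteeing that the resulting structure really is a \emph{tree} obtained from a single path rather than a disconnected object or one with a cycle; this follows because each consecutive pair of paths is joined by exactly one edge and the configuration is laid out linearly, but making this rigorous requires an induction on $n$ showing that the partial union of the first $\ell$ paths is always a path-like tree realizable by a normalized partial embedding.
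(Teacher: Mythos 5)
Your overall strategy is the same as the paper's: both directions pass through Lemma \ref{lemma: every plt from norma embb} and a snake-type dictionary between the rows of a normalized embedding and the paths $P^1_{k_1},\ldots,P^n_{k_n}$ of a linear configuration, with the distance equality in \eqref{eq: condi_union_consec_paths} identified with same-column (grid-distance-$1$) positions after the orientation reversal between consecutive rows. Your converse direction (place $P^l$ in row $l-1$ with alternating orientation, check the overhang clauses of the normalized-embedding definition, insert the vertical edges $v^i_{k_i}v^{i+1}_1$ to obtain a normalized embedding of $P_m$, then replace them by the joining edges of $T$ via elementary transformations) is precisely the paper's construction, and your same-column verification is correct.

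There is, however, one concrete gap in your direction from a path-like tree to a linear configuration. You observe that consecutive rows may be joined either by a transformed edge $uv$ or by a still-vertical edge of the original path, but you only process the transformed case. If some vertical edge of the normalized embedding is \emph{not} moved by the sequence of elementary transformations --- for instance, $T=P_n$ itself realized by a snake embedding with no transformations at all --- then reading off each row as a separate path $P^l_{k_l}$ produces a joining edge $v^l_{k_l}v^{l+1}_1$, i.e., one with $j=1$, which \eqref{eq: condi_union_consec_paths} explicitly forbids; so the row-by-row reading fails as written. The paper closes this with a sentence your plan is missing: the normalized embedding can be \emph{chosen} so that every vertical edge between consecutive rows has been moved to obtain $T$. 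Equivalently, whenever a vertical edge is unmoved, the two rows it joins together with that edge form a subpath of $T$ and may be straightened into a single longer row, giving a normalized embedding with fewer rows; iterating, one may assume all vertical edges are transformed, after which your dictionary applies verbatim. With this amendment your argument coincides with the paper's proof (your proposed induction for the single-path/connectedness check is extra rigor at a point the paper simply asserts as clear).
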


\begin{proof}
Assume that a tree $T$ can be drawn in the way described above. It follows that $T$ can be drawn in such a way that all vertices of $T$ form the components of a linear forest embedded in  a horizontal line. Let the components of this lineal forest be:   
$$v^1_1v^1_2\ldots v^1_{k_1}; v^2_1v^2_2\ldots v^2_{k_2}; \ldots ; v^n_1v^n_2\ldots v^n_{k_n},$$
where the component generated by $v^i_1,v^i_2,\ldots, v^i_{k_i}$ is denoted by $P^i$, for $i\in\{1,2,\ldots,n\}$. Then $P^1,P^2,\ldots,P^n$ are embedded in a horizontal line and the remaining edges of $T$ join vertices of consecutive paths $P^i, P^{i+1}$, where $i\in \{1,2,\ldots,n-1\}$. Furthermore, if $v_i^lv_j^{l+1}\in E(T)$ then $d_{T}(v^l_i,v^l_{k_l})=d_{T}(v^{l+1}_1,v^{l+1}_j) >1$. Then, it is clear that we can draw this set of paths as a subgraph of the $2$-dimensional grid in such a way that:
\begin{itemize}
\item $v^1_i$ is located at position $(i-1,0)$ in the grid, for $i\in \{1,2,\ldots,k_1\}$.
\item The vertices of $P^2$ coincide with vertices of row $1$ in the grid. Vertex $v_1^2$ is right on top of vertex $v^1_{k_1}$, that is, at position $(k_1-1,1)$ and the remaining vertices of $P^2$ are to the left of $v_1^2$.
\item The vertices of $P^3$ coincide with vertices of row $2$ in the grid. Vertex $v_1^3$ is right on top of vertex $v^2_{k_2}$, that is, at position $(k_1-k_2,2)$ and the remaining vertices of $P^3$ are to the right of $v_1^3$.
\item The embedding of the remaining paths follow the same pattern.
\end{itemize}
Thus, we can introduce the edges $\{v^i_{k_i}v^{i+1}_1\}_{i=1}^{n-1}$ in order to obtain a normalized embedding of $P_{m}$, $m=\sum_{i=1}^nk_i$, in the grid. By replacing this set of vertical edges by the edges of $T$ that do not belong to the linear forest, joining consecutive paths in the linear forest, we obtain $T$. Hence, $T$ comes from a normalized embedding of a path in the $2$-dimensional grid by a sequence of elementary transformations. Therefore, $T$ is a path-like tree.

Let us prove the converse. Assume that $T$ is a path-like tree. By Lemma \ref{lemma: every plt from norma embb}, $T$ can be obtained from a normalized embedding of a path by a sequence of elementary trasformations. Furthermore, this embedding can be chosen in such a way that if  $i$ and $i+1$ are two consecutive rows in the normalized embedding, the vertical edge that joins vertices of these rows has been moved to obtain $T$.
Then, we can take the linear forest obtained from the rows of the normalized embedding and embed this linear forest in a horizontal line. We do this in such a way that if one horizontal subpath is immediately below to another horizontal path in the normalized embedding then the path below is immediate to the left of the path above when embedding the linear forest in the horizontal line. Thus, the vertical edges of the embedding will join vertices that belong to consecutive paths in a linear configuration, having the properties described above.
\end{proof}

From now on, we will call a path-like tree drawn in the way described above, to be a path-like tree drawn in {\it linear configuration}.
Notice that, a path-like tree may admit different possible linear configurations. That is, two different linear configurations may produce isomorphic path-like trees.

\subsection{A tree of type $H$}
A tree of type $H$ is a tree such that its degree sequence has the following form: $1,1,1,1,2,2,\ldots, 2,3,3$ and such that the two vertices of degree $3$ are at distance $r\ge 1$ in $H$. In other words, a tree of type $H$ is a tree that consists of two paths joined by a path of length $r$. More precisely, we say that $T$ is a {\it tree of type $H$ with parameters $(s,t;r; a,b)$}, $1< a< s$, $1< b< t$, if $V(T)=\{u_1,\ldots,u_s,v_1,\ldots,v_t,w_1,\ldots,w_{r+1}\}$ such that  $u_a=w_1$, $v_b=w_{r+1}$ and $E(T)=\{u_1u_2,u_2u_3,\ldots,u_{s-1}u_s\}\cup\{v_1v_2,v_2v_3,\ldots,v_{t-1}v_t\}\cup \{w_1w_2,w_2w_3,\ldots,w_{r}w_{r+1}\}.$

Our immediate goal is to characterize path-like trees of type $H$.

Let $T$ be a tree drawn in linear configuration. Let $\omega_1$ and $\omega_2$ be two consecutive vertices of $T$ of degree $3$ (with no vertices of degree 4 between them), where $\omega_2$ is on the right side of $\omega_1$. Assume that $\omega_i\in P^l_{k_l}$.  We use $\omega_i^+$ to indicate that $\omega_i$ is adjacent to a vertex of $P^{l+1}_{k_{l+1}}$ and $\omega_i^-$ to indicate that $\omega_i$ is adjacent to a vertex of $P^{l-1}_{k_{l-1}}$. Then, according to the possible configurations $(\omega_1^{\epsilon_1}, \omega_2^{\epsilon_2})$, $\epsilon_i\in\{+,-\}$, for $i=1,2$, we have six different models of induced subpaths in $T$. 
\begin{enumerate}
  \item[(a)] We find $(\omega_1^-, \omega_2^+)$ and $\omega_1,\omega_2\in P^{l}_{k_{l}}$. The linear configuration contains three consecutive paths $P^{l-1}_{k_{l-1}},P^{l}_{k_l}, $ and $P^{l+1}_{k_{l+1}}$ and two edges of the form $v^{l-1}_\alpha v^{l}_i$ and $v^{l}_{i+r}v^{l+1}_{\beta}$, for some $1<i<k_{l}-1$ and $1\le r < k_{l}-i$. The path induced by $v^{l}_i,v^{l}_{i+1},\ldots, v^{l}_{i+r}$ is called a {\it parallel path} and it is denoted by $\pi$.
  
  \item[(b)] We find $(\omega_1^+, \omega_2^-)$ and $\omega_1,\omega_2\in P^{l}_{k_{l}}$. The linear configuration contains three consecutive paths $P^{l-1}_{k_{l-1}}, P^{l}_{k_{l}}$ and $P^{l+1}_{k_{l+1}}$ and two edges of the form $v^{l-1}_\alpha v^{l}_i$ and $v^{l}_{i-r}v^{l+1}_{\beta}$,  for some $2<i<k_{l}$ and $1\le r <i-1$. The path induced by $v^{l}_i,v^{l}_{i-1},\ldots, v^{l}_{i-r}$ is called a {\it crossed path} and it is denoted by $\gamma$.
  
   \item[(c)] We find $(\omega_1^+, \omega_2^-)$, $\omega_1\in P^l_{k_l}$ and $\omega_2\in P^{l+m}_{k_{l+m}}$, for some $m\ge 1$. The linear configuration contains $m+1\ge 2$ consecutive paths $P^{l}_{k_l}, P^{l+1}_{k_{l+1}}, \ldots, P^{l+m}_{k_{l+m}}$ and edges of the form $v^{l}_iv^{l+1}_{k_{{l+1}}}, v^{l+1}_{1}v^{l+2}_{k_{{l+2}}},\ldots, v^{l+m-2}_{1}v^{l+m-1}_{k_{{l+m-1}}}, v^{l+m-1}_{1}v^{l+m}_{j}$, for some $1<i<k_l$ and some $1<j<k_{l+m}$. The path connecting  $v^{l}_i$ to $v^{l+m}_{j}$ is called a {\it bridge path} and it is denoted by $\beta$.
\item[(d)] We find $(\omega_1^-, \omega_2^+)$, $\omega_1\in P^l_{k_l}$ and $\omega_2\in P^{l+m}_{k_{l+m}}$, for some $m\ge 1$. The linear configuration contains $m+3\ge 4$ consecutive paths $P^{l-1}_{k_{l-1}}, P^{l}_{k_{l}}, \ldots, P^{l+m+1}_{k_{l+m+1}}$ and edges of the form $v^{l-1}_{\alpha}v^{l}_i, v^{l}_{1}v^{l+1}_{k_{{l+1}}},\ldots, v^{l+m-1}_{1}v^{l+m}_{k_{{l+m}}}, v^{l+m}_{j}v^{l+m+1}_{\beta}$, for some $1<i<k_l$ and some $1<j<k_{l+m}$. The path connecting  $v^{l}_i$ to $v^{l+m}_{j}$ is called an {\it indirect path} and it is denoted by $\mu$.

\item[(e)] We find $(\omega_1^-, \omega_2^-)$, $\omega_1\in P^l_{k_l}$ and $\omega_2\in P^{l+m}_{k_{l+m}}$, for some $m\ge 1$. The linear configuration contains $m+2\ge 3$ consecutive paths $P^{l-1}_{k_{l-1}}, P^{l}_{k_{l}}, \ldots, P^{l+m}_{k_{l+m}}$ and edges of the form $v^{l-1}_{\alpha}v^{l}_i, v^{l}_{1}v^{l+1}_{k_{{l+1}}},\ldots, v^{l+m-2}_{1}v^{l+m-1}_{k_{{l+m-1}}}, v^{l+m-1}_{1}v^{l+m}_{j}$, for some $1<i<k_l$ and some $1<j<k_{l+m}$. The path connecting  $v^{l}_i$ to $v^{l+m}_{j}$ is called a {\it semi-indirect path} and it is denoted by $\sigma^-$.

\item[(f)] We find $(\omega_1^+, \omega_2^+)$, $\omega_1\in P^l_{k_l}$ and $\omega_2\in P^{l+m}_{k_{l+m}}$, for some $m\ge 1$.The linear configuration contains $m+2\ge 3$ consecutive paths $P^{l}_{k_{l}},P^{l+1}_{k_{l-1}},  \ldots, P^{l+m}_{k_{l+m}}, P^{l+m+1}_{k_{l+m+1}}$ and edges of the form $v^{l}_{i}v^{l+1}_{k_{{l+1}}},v^{l+1}_{1}v^{l+2}_{k_{{l+2}}}\ldots, v^{l+m-1}_{1}v^{l+m}_{k_{{l+2}}},v^{l+m-1}_{1}v^{l+m}_{k_{{l+m-1}}}, v^{l+m}_{j}v^{l+m+1}_{\beta}$, for some $1<i<k_l$ and some $1<j<k_{l+m}$. The path connecting  $v^{l}_i$ to $v^{l+m}_{j}$ is also called a {\it semi-indirect path} and it is denoted by $\sigma^+$.

\end{enumerate}

Then, we have the following proposition.

\begin{proposition}\label{theo: charac plt type H with pi or gamma}
Let $T$ be a tree of type $H$ with parameters $(s,t; r;a,b)$.
\begin{enumerate}
  \item[(i)] $T$ is a path-like tree with a parallel path $\pi$ if and only if $a$ is a divisor of $s$ 
and $b$ is a divisor of $t$, where $a\le s-a$ and $b\le t-b$.
  \item[(ii)] $T$ is a path-like tree with a crossed path $\gamma$ if and only if
$(a+r)$ is a divisor of $(t-b)$ and $(b+r)$ is a divisor of $(s-a)$, where $a+r\le t-b$ and $b+r\le s-a$.
\end{enumerate}
\end{proposition}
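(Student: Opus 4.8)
The plan is to prove each of the two equivalences by treating necessity and sufficiency separately, working entirely inside a linear configuration (Theorem \ref{theo: plt iff a linear config. exists}) and exploiting two facts: the distance identity \eqref{eq: condi_union_consec_paths} that every joining edge must satisfy, and the hypothesis that $\omega_1$ and $\omega_2$ are the only vertices of degree $3$ in $T$. The guiding idea is that the outer portion of each leg (the part not lying on $P^l$) is forced to fold into a stack of sub-paths of equal length joined end to end, and the position of the branch vertex then pins down a divisibility relation.

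I would begin with the necessity half of (i). Assume $T$ is drawn with a parallel path, so we are in model (a): $\omega_1=v^l_i$, $\omega_2=v^l_{i+r}$, $\pi=v^l_iv^l_{i+1}\cdots v^l_{i+r}$, with edges $v^{l-1}_\alpha v^l_i$ and $v^l_{i+r}v^{l+1}_\beta$. At $\omega_1$ the $u$-leg splits into the sub-path $v^l_{i-1}\cdots v^l_1$ lying on $P^l$ and the sub-path that leaves $P^l$ through the edge to $P^{l-1}$; because $a\le s-a$ the former is the shorter branch, of length $a-1$, so $i=a$. Since $v^{l-1}_\alpha\neq\omega_1,\omega_2$ it has degree at most $2$, hence must be an end vertex of $P^{l-1}$, and feeding this into \eqref{eq: condi_union_consec_paths} gives $\alpha=1$ and $k_{l-1}=i$. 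Iterating on the remaining joining edges $P^{j-1}P^j$ ($j\le l-1$), each of which must again meet an end vertex lest a third degree-$3$ vertex appear, forces $k_1=\cdots=k_{l-1}=i=a$ and shows these paths are strung together end to end into the outer branch of $u$. That branch therefore has $(l-1)a$ vertices, so $s=(a-1)+1+(l-1)a=la$, i.e. $a\mid s$, with $a\le s-a$ equivalent to $l\ge 2$ (which holds since $P^{l-1}$ exists). The computation at $\omega_2$ is symmetric and yields $b\mid t$ with $b\le t-b$.

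For (ii) I would run the same scheme in model (b), where now $\gamma=v^l_iv^l_{i-1}\cdots v^l_{i-r}$ lies between the branch vertices, $\omega_1=v^l_{i-r}$, $\omega_2=v^l_i$, and the outgoing edges are $v^l_{i-r}v^{l+1}_\beta$ (upward) and $v^{l-1}_\alpha v^l_i$ (downward). The crossing makes the distances in \eqref{eq: condi_union_consec_paths} measure segments of $P^l$ that now include $\gamma$: the downward edge forces the lower paths to share the common length $i=a+r$, so the down-branch of the $v$-leg folds into blocks of size $a+r$, giving $(a+r)\mid(t-b)$; dually the upward edge forces the upper paths to share the common length $b+r$, so the up-branch of the $u$-leg folds into blocks of size $b+r$, giving $(b+r)\mid(s-a)$. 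The inequalities $a+r\le t-b$ and $b+r\le s-a$ simply record that at least one such block is present on each side.

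Finally, for both sufficiency directions I would reverse the construction. Given the divisibility data I build the linear configuration explicitly: $l-1$ lower paths, the central path $P^l$ of length $a+b+r-1$ carrying $\omega_1,\omega_2$ together with the appropriate parallel (resp. crossed) segment, and the upper paths, each of the prescribed common length ($a$ and $b$ in case (i); $a+r$ and $b+r$ in case (ii)), joined end to end; one then checks that every joining edge satisfies \eqref{eq: condi_union_consec_paths} and that the resulting tree is exactly $H(s,t;r;a,b)$. I expect the genuine obstacle to be the necessity step: one must argue rigorously, by induction along the stack of paths, that each successive joining edge is forced to attach at an end vertex (so that no third vertex of degree $3$ is created) and that \eqref{eq: condi_union_consec_paths} then equalizes consecutive path lengths. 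Both cases also require verifying that the shorter branch is the one lying on $P^l$, which is precisely what the stated inequalities fix, and the crossed case demands the extra bookkeeping that pairs the $u$-leg with $s-a$ and the $v$-leg with $t-b$.
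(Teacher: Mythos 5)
Your proposal is correct and follows essentially the same route as the paper: necessity by arguing inside a linear configuration that every joining edge must attach at a terminal vertex (else a third vertex of degree $3$ appears), which together with the distance condition \eqref{eq: condi_union_consec_paths} forces $k_1=\cdots=k_{l-1}$ and $k_{l+1}=\cdots=k_n$ and yields the divisibilities $s=la$, $t-b=(n-l)b$ (resp.\ $(a+r)\mid(t-b)$, $(b+r)\mid(s-a)$); and sufficiency by the same explicit block construction with stacks of $P_a$ (resp.\ $P_{a+r}$) and $P_b$ (resp.\ $P_{b+r}$) around a central path of order $a+b+r-1$. If anything, you spell out more carefully than the paper why the attachment index must be $1$ and how the normalizing inequalities $a\le s-a$, $b\le t-b$, $a+r\le t-b$, $b+r\le s-a$ pin down which leg lies on $P^l$, a point the paper's proof assumes implicitly.
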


\begin{proof}
(i) Assume that $T$ is a path-like tree of type $H$ with parameters $(s,t;r;a,b)$, which contains a parallel path, and let
$P^1_{k_1},P^2_{k_2}, \ldots, P^n_{k_n}$ be the subpaths of a linear configuration of the path-like tree. Assume that $v^l_a$ and $v^l_{a+r}$ are the only two vertices of degree $3$, for $2\le l\le n-1$ and $1<a<a+r<k_l$. It follows that $k_{l-1}=a$, since the edge that joins $v^l_a$ with the vertex of $P^{l-1}_{k_{l-1}}$ has to be incident with a terminal vertex of $P^{l-1}_{k_{l-1}}$, for if not, some vertex of $P^{l-1}_{k_{l-1}}$ would have degree $3$ in $T$, and this is impossible since all vertices of degree $3$ in $T$ are $v^l_a$ and $v^l_{a+r}$. A similar reasoning shows that $k_{1}=k_{2}=\ldots =k_{l-1}=a$. In a similar way, we have that $k_{l+1}=k_{l+2}=\ldots =k_{n}=k_l-(a+r)+1$. Let us prove the converse. Assume that $T$ is a tree of type $H$ with parameters $(s,t;r;a,b)$ such that $a$ is a divisor of $(s-a)$ 
and $b$ is a divisor of $(t-b)$. Then, there exist integers $x$ and $y$ with $ax=(s-a)$ and $by=(t-b)$. Let $P^1_{k_1}\cong P^2_{k_2}\cong\ldots \cong P^x_{k_x}\cong P_a$, $P^{x+1}_{k_{x+1}}\cong P_{a+r-1+b}$ and $P^{x+2}_{k_{x+1}}\cong P^{x+3}_{k_{x+3}}\cong\ldots \cong P^{x+y+1}_{k_{x+y+1}}\cong P_b$. Then we embed the subpaths in a horizontal line as described in the definition of the linear configuration and we join the subpaths $P^i_{k_i}$ and $P^{i+1}_{k_{i+1}}$, $i=1,2,\ldots, x+y$, with an edge in such a way that we only produce two vertices of degree $3$, at distance $r$, both such vertices belonging to the subpath $P^{x+1}_{k_{x+1}}$, and never joining the last vertex of $P^i_{k_i}$ with the first vertex of $P^{i+1}_{k_{i+1}}$, $i=1,2,\ldots, x+y$. It is easy to see that this produces a linear configuration of the tree $T$, as described in the beginning of the section. Therefore, $T$ is a path-like tree with a parallel path.

(ii)  Assume that $T$ is a path-like tree of type $H$ with parameters $(s,t;r;a,b)$, which contains a crossed path, and let $P^1_{k_1},P^2_{k_2}, \ldots, P^n_{k_n}$ be the subpaths of a linear configuration of the path-like tree. Assume that $v^l_a$ and $v^l_{a+r}$ are the only two vertices of degree $3$, for $2\le l\le n-1$ and $1<a<a+r<k_l$. It follows that $k_{l-1}=a+r$, since the edge that joins vertex $v^l_{a+r}$ with the vertex of $P_{k_{l-1}}^{l-1}$ has to be incident with a terminal vertex of $P_{k_{l-1}}^{l-1}$. For if not, some vertex of $P_{k_{l-1}}^{l-1}$ would have degree $3$ in $T$, and this is impossible since all the vetices of degree $3$ in $T$ are $v^l_a$ and $v^l_{a+r}$. A similar reasoning shows that $k_{1}=k_{2}=\ldots =k_{l-1}=a+r$. In a similar way, we have that $k_{l+1}=k_{l+2}=\ldots =k_{n}=k_l-a+1$. Let us prove the converse. Assume that $T$ is a tree of type $H$ with parameters $(s,t;r;a,b)$ such that $(a+r)$ is a divisor of $(t-b)$ and $(b+r)$ is a divisor of $(s-a)$, where $a+r\le t-b$ and $b+r\le s-a$. Then, there exist integers $x$ and $y$ with $(a+r)x=t-b$ and $(b+r)y=s-a$. Let $P^1_{k_1}\cong P^2_{k_2}\cong\ldots P^x_{k_x}\cong P_{a+r}$, $P^{x+1}_{k_{x+1}}\cong P_{a+r-1+b}$ and $P^{x+2}_{k_{x+2}}\cong P^{x+3}_{k_{x+3}}\cong\ldots \cong P^{x+y+1}_{k_{x+y+1}}\cong P_{b+r}$. Then, we embed the subpaths in a horizontal line as described in the definition of linear configuration, and we join the subpaths $P^i_{k_i}$ and $P^{i+1}_{k_{i+1}}$, $i=1,2,\ldots, x+y$, with an edge in such a way that we only produce two vertices of degree $3$, at distance $r$, both such vertices belonging to the subpath $P^{x+1}_{k_{x+1}}$, and never joining the last vertex of $P^i_{k_i}$ with the first vertex of $P^{i+1}_{k_{i+1}}$, $i=1,2,\ldots,x+y$. It is easy to see that this produces a linear configuration of the tree $T$, as described in the beginning of the section. Therefore, $T$ is a path-like tree with a crossed path.
\end{proof}

\begin{remark}\label{remark_divisors_paths}
Let $T$ be a path-like tree of type $H$ with parameters $(s,t;r;a,b)$. By Proposition \ref{theo: charac plt type H with pi or gamma}, we can associate two integers $d_1$ and $d_2$ to $T$ which act as divisors of $s-a$ and $t-b$, subject to some constraints:
\begin{enumerate}
  \item[(i)] If $T$ is a path-like tree with a parallel path $\pi$ then $d_1=a$ (when $a\le s-a$) and $d_2=b$ (when $b\le t-b$).
 \item[(ii)] If $T$ is a path-like tree with a crossed path $\gamma$ then $d_1=b+r$ (when $b+r\le s-a$) and $d_2=a+r$ (when $a+r\le t-b$).
\end{enumerate}

We will refer to $d_1$ and $d_2$ as the horizontal divisors of $T$.
\end{remark}

The next lemma is an easy exercise.

\begin{lemma}\label{lemma: path lengths_plt_brigde}
Let $T$ be a path-like tree of type $H$ with parameters $(s,t;r;a,b)$, which contains a bridge path $\beta$ and let $P^1_{k_1},P^2_{k_2}, \ldots, P^n_{k_n}$ be the subpaths of a linear configuration of the path-like tree. Also let $P^l_{k_l}$ and $P^{l+m}_{k_{l+m}}$, $m\ge 1$, be the two paths containing the vertices of degree $3$. Then, $k_1=k_2=\ldots=k_l$ and $k_{l+m}=k_{l+m+1}=\ldots=k_n$. Moreover, if $m\ge 2$ then $k_{l+1}=k_{l+2}=\ldots=k_{l+m-1}$.

\end{lemma}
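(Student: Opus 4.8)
The plan is to exploit two facts simultaneously: the equidistance constraint \eqref{eq: condi_union_consec_paths} that every edge joining consecutive subpaths must satisfy, and the fact that a tree of type $H$ has exactly two vertices of degree $3$, namely $\omega_1\in P^l_{k_l}$ and $\omega_2\in P^{l+m}_{k_{l+m}}$, every other vertex having degree at most $2$. The crucial elementary observation is this: if a connecting edge $v^p_\alpha v^{p+1}_\beta$ is incident to neither $\omega_1$ nor $\omega_2$, then each of its endpoints must be a terminal vertex of its subpath, since otherwise that endpoint would have degree $2$ inside its subpath plus the connecting edge, hence degree $3$ in $T$, contradicting that $\omega_1,\omega_2$ are the only such vertices.

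First I would prove $k_1=k_2=\cdots=k_l$. Since $\omega_1$ is of type $\omega_1^+$, its third edge leaves toward $P^{l+1}_{k_{l+1}}$, and $\omega_2$ lies strictly to the right of $P^l_{k_l}$; hence the subgraph induced by $P^1_{k_1}\cup\cdots\cup P^l_{k_l}$ together with the $l-1$ edges joining these consecutive subpaths has all its degrees at most $2$ and, being connected, is a path. Walking along this path from the $P^1_{k_1}$ end and using the observation above, one shows inductively that the edge $e_p$ joining $P^p_{k_p}$ to $P^{p+1}_{k_{p+1}}$ (for $1\le p\le l-1$) connects the free terminal vertex $v^p_1$ to the terminal vertex $v^{p+1}_{k_{p+1}}$; in particular $\alpha=1$ and $\beta=k_{p+1}$. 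Plugging this into \eqref{eq: condi_union_consec_paths} gives $d_{T}(v^p_1,v^p_{k_p})=d_{T}(v^{p+1}_1,v^{p+1}_{k_{p+1}})$, that is $k_p-1=k_{p+1}-1$, whence $k_p=k_{p+1}$. A mirror-image argument applied to $P^{l+m}_{k_{l+m}}\cup\cdots\cup P^n_{k_n}$ (using that $\omega_2$ is of type $\omega_2^-$, so its third edge leaves toward $P^{l+m-1}_{k_{l+m-1}}$) yields $k_{l+m}=k_{l+m+1}=\cdots=k_n$.

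The intermediate equalities, in the case $m\ge 2$, are the quickest part. The bridge configuration (c) lists the connecting edges between the interior subpaths explicitly as $v^{l+q}_1 v^{l+q+1}_{k_{l+q+1}}$ for $1\le q\le m-2$, each of which traverses a whole interior subpath. Applying \eqref{eq: condi_union_consec_paths} to such an edge gives $d_{T}(v^{l+q}_1,v^{l+q}_{k_{l+q}})=d_{T}(v^{l+q+1}_1,v^{l+q+1}_{k_{l+q+1}})$, i.e.\ $k_{l+q}-1=k_{l+q+1}-1$, so $k_{l+q}=k_{l+q+1}$; chaining these over $q=1,\ldots,m-2$ gives $k_{l+1}=k_{l+2}=\cdots=k_{l+m-1}$ (the statement being vacuous when $m=2$).

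I expect the only real obstacle to be the inductive identification of the connecting edges in the two pendant blocks: one must argue carefully that the terminal vertex $v^{p+1}_{k_{p+1}}$, and not some interior vertex, receives $e_p$, ruling out both the possibility that $e_p$ attaches to $\omega_1$ and the possibility that it reuses an already occupied terminal vertex. Once the reduction ``each pendant block is a path'' is in place this becomes routine, and all the equalities $k_p=k_{p+1}$ drop out of the single distance identity \eqref{eq: condi_union_consec_paths}; the interior block requires nothing beyond \eqref{eq: condi_union_consec_paths} and the edge list already recorded in configuration (c), which is why the lemma is an easy exercise.
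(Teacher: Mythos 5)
Your proof is correct and follows exactly the approach the paper intends: the paper states this lemma without proof (``an easy exercise''), and your argument---using the degree bound to force every connecting edge not incident to $\omega_1$ or $\omega_2$ to join a first vertex $v^p_1$ to a last vertex $v^{p+1}_{k_{p+1}}$, then extracting $k_p=k_{p+1}$ from the distance identity in \eqref{eq: condi_union_consec_paths}---is precisely the technique the paper itself employs in the necessity parts of Proposition \ref{theo: charac plt type H with pi or gamma} and Theorems \ref{theo: characterization of a H with a indirect path} and \ref{theo: characterization of a H with a semi-indirect path}. The one point worth making explicit in your induction is that the degenerate attachments $\alpha=k_p$ and $\beta=1$ are excluded directly by \eqref{eq: condi_union_consec_paths} (either would force $j=1$, contradicting the clause $j>1$), which settles the base case $p=1$ where no previously occupied terminal vertex is available to invoke.
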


\begin{theorem}\label{theo: characterization of a H with a bridge}
Let $T$ be a tree of type $H$ with parameters $(s,t;r;a,b)$. Then, $T$ is a path-like tree with a bridge path if and only if one of the following holds:
\begin{itemize}
  \item[(i)] There exist a divisor $d_1$ of $s$, a divisor $d_2$ of $t$ and $\delta\in\mathbb{Z}$, with $2\le \delta\le \min\{d_1,d_2\}-1$ such that $d_1 -\delta=\min\{a-1,s-a\}$, $d_2-\delta=\min\{b-1, t-b\}$ and $\delta$ is a divisor of $r-1$.
  \item[(ii)] There exist $\delta\in\{a,s-a+1\}\cap \{b,t-b+1\}$ such that $\delta$ is a divisor of $r-1$.
   \item[(iii)] There exist $\delta\in \{a,s-a+1\}$ and a divisor $d_2$ of $t$, such that $d_2-\delta=\min\{b-1,t-b\}$ and $\delta$ is a divisor of $r-1$.
  \item[(iv)] There exist $\delta\in \{b,t-b+1\}$ and a divisor $d_1$ of $s$, such that $d_1-\delta=\min\{a-1,s-a\}$ and $\delta$ is a divisor of $r-1$.
\end{itemize}
\end{theorem}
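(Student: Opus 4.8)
The plan is to establish both implications in the spirit of Proposition \ref{theo: charac plt type H with pi or gamma}: for necessity I analyze the structure forced on any linear configuration of $T$ that has a bridge path, and for sufficiency I exhibit an explicit configuration. So fix a linear configuration $P^1_{k_1},\ldots,P^n_{k_n}$ realizing $T$, let $\omega_1=v^l_i$ and $\omega_2=v^{l+m}_j$ ($m\ge 1$) be the two vertices of degree $3$ joined by the bridge path, and apply Lemma \ref{lemma: path lengths_plt_brigde} to obtain the constant block lengths $d_1:=k_1=\cdots=k_l$, $d_2:=k_{l+m}=\cdots=k_n$, and (when $m\ge 2$) $\delta:=k_{l+1}=\cdots=k_{l+m-1}$.

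The first key point is that the subgraph induced by $P^1,\ldots,P^l$ together with the joining edges among them is connected and acyclic, and that $\omega_1$ is its only candidate for a vertex of degree $3$; but $\omega_1$ spends its third edge going up to $P^{l+1}$, so inside this block every vertex has degree at most $2$. Hence the block is a path whose two ends are leaves of $T$, and since it carries both $u$-arms of $\omega_1$ it is exactly the $u$-path. Therefore $s=l\,d_1$, i.e. $d_1\mid s$; symmetrically the top block is the $v$-path and $d_2\mid t$. The second point extracts the positions from \eqref{eq: condi_union_consec_paths}: each interior path of a block must receive its two joining edges at its two endpoints, and \eqref{eq: condi_union_consec_paths} forbids such an edge at the last vertex of the lower path (it would force the upper index to be $1$), so every upward joining edge is rigidly of the form $v^g_1v^{g+1}_{k_{g+1}}$. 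Reading this at $\omega_1$ gives, when $l\ge 2$, that the short $u$-arm lies inside $P^l$ and equals $\min\{a-1,s-a\}$, while $\delta=d_1-\min\{a-1,s-a\}$ (for $m\ge 2$ this is the intermediate length $k_{l+1}$, and for $m=1$ it is defined by this relation and forced to match the other end by the mirror condition). Thus $d_1-\delta=\min\{a-1,s-a\}$ and, symmetrically, $d_2-\delta=\min\{b-1,t-b\}$, with $2\le\delta\le\min\{d_1,d_2\}-1$ since every $k_i\ge 2$ and both $\omega_1,\omega_2$ are interior vertices. When instead $l=1$ the block $P^1$ has no downward edge, both ends are leaves, and $\omega_1$ may sit in the mirror position, so the relation relaxes to $\delta\in\{a,s-a+1\}$; the same dichotomy occurs at the top.

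Finally, walking along the bridge $\omega_1\to v^{l+1}_{\delta}\to\cdots\to v^{l+m-1}_1\to\omega_2$ and counting edges yields $r=1+(m-1)\delta$, whence $\delta\mid (r-1)$ (trivially when $m=1$, since then $r=1$). Assembling these relations and splitting according to whether the bottom block, the top block, both, or neither collapse to a single path yields exactly alternatives (iii), (iv), (ii), and (i). For sufficiency I reverse the recipe: from data $d_1,d_2,\delta$ (or the corresponding $\delta\in\{a,s-a+1\},\{b,t-b+1\}$ in the degenerate cases) satisfying the divisibility relations, I take $s/d_1$ copies of $P_{d_1}$, then $(r-1)/\delta$ copies of $P_{\delta}$, then $t/d_2$ copies of $P_{d_2}$, embed them in a horizontal line and join them by the rigid snake edges, and verify directly that exactly two vertices of degree $3$ appear, at distance $r$, with no others, precisely as in the converse of Proposition \ref{theo: charac plt type H with pi or gamma}. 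The main obstacle is the bookkeeping in the degenerate configurations: one must match the single parameter $\delta$ simultaneously at both ends (which is the content of the joint existence statements), treat $m=1$ where no interior bridge path exists, and check that the relaxed options $\delta\in\{a,s-a+1\}$ for a collapsed block are genuinely realizable and account for the alternatives not already covered by case (i).
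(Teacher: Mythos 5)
Your proof is correct and follows essentially the same route as the paper's: necessity by invoking Lemma \ref{lemma: path lengths_plt_brigde} to get the constant block lengths $d_1,\delta,d_2$ and then splitting into the four alternatives according to whether the bottom block ($l=1$), the top block ($l+m=n$), both, or neither collapse to a single path; sufficiency by the same explicit construction from $s/d_1$ copies of $P_{d_1}$, $(r-1)/\delta$ copies of $P_\delta$ and $t/d_2$ copies of $P_{d_2}$. If anything, you make explicit some steps the paper leaves as ``clear,'' namely that condition \eqref{eq: condi_union_consec_paths} rigidly forces the snake edges $v^g_1v^{g+1}_{k_{g+1}}$ (hence $d_1\mid s$, $d_2\mid t$) and the count $r=1+(m-1)\delta$ giving $\delta\mid r-1$.
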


\begin{proof}
Assume that $T$ is a path-like tree with a bridge path and let $P^1_{k_1},P^2_{k_2},$ \ldots, $P^n_{k_n}$ be the subpaths of a linear configuration of $T$. Assume that, the vertices of degree $3$ belong to $P^l_{k_l}$ and $P^{l+m}_{k_{l+m}}$, for $m\ge 1$.
Suppose first that $l=1$ and $l+m=n$. Then, when we introduce the path that joins a vertex of $P^1_{k_1}$ with $P^{n}_{k_n}$, property (ii) holds. Suppose now that either $l>1$ or  $l+m<n$. By Lemma \ref{lemma: path lengths_plt_brigde}, $k_1=k_2=\ldots=k_l$ and $k_{l+m}=k_{l+m+1}=\ldots=k_n$.

We distinguish three cases.

\underline{Case $l=1$.} Assume that $v^{1}_av^{2}_{\delta}\in E(T)$. Then, it is clear that $k_n -\delta=\min\{b-1,t-b\}$, $s-\delta=a-1$ and $\delta$ is a divisor of $r-1$. Depending on our choice of $s,t,a,b,s-a,t-b$, we have proven either (iii) or (iv).

\underline{Case  $1<l<l+m<n$.} Let $v^{l}_av^{l+1}_{\delta}\in E(T)$. Then, it is clear that $k_1-\delta=\min\{a-1, s-a\}$ and $k_n-\delta=\min\{b-1, t-b\}$. Thus, for $d_1=k_1$ and $d_2=k_n$ condition (i) holds.

\underline{Case $l+m=n$.} This case can be treated in a similar way to case $l=1$, with a similar result. This completes the proof of the necessity.

Let us prove the converse. That is, any tree of type $H$ with parameters $(s,t;r;a,b)$
verifying one of the conditions described in the statement of the theorem is a path-like tree. It is clear that if condition (ii) holds, then $T$ is a path-like tree. Now, since conditions (iii) and (iv) are symmetric, we will just show that if condition (iii) holds then $T$ is a path-like tree. Suppose that there exist a divisor $d_2$ of $t$, such that $d_2-\delta=b-1$ and $\delta$ is a divisor of $r-1$, when we assume $b-1\le t-b$ and $\delta\in\{a,s-a+1\}$, we will prove that $T$ is a path-like tree. Let $P^1_{s}=v^1_1v^1_2\ldots v^1_s$, $P^2_{k_2}\cong P^3_{k_3}\cong\ldots \cong P^{m}_{k_{m}}\cong P_\delta$ and
$P^{m+1}_{k_{m+1}}\cong P^{m+2}_{k_{m+2}}\cong\ldots \cong P^n_{k_n}\cong P_{d_2} =v_1v_2\ldots v_{d_2}$, where $r-1=(m-1)\delta$ and $t=(n-m)d_2$. Next, embed the paths on a horizontal line as described in the definition of the linear configuration. Now, the path that joins the vertices of degree $3$, joins a internal vertex of $P^1_{s}$ with and end vertex of $P^2_{k_2}$, and also an end vertex of $P^{m}_{k_{m}}$ with an internal vertex of $P^{m+1}_{k_{m+1}}$, when $m\ge 2$, or an internal vertex of $P^1_{s}$ with an internal vertex of $P^2_{k_2}$, when $m=1$. The rest of the edges are introduced in such a way that we do not create any more vertices of degree $3$. Notice that, there is only one possible way to do this. 
It is easy to see that the tails of the $H$-type tree are as requested, and that the tree is in fact a path-like tree.

Finally, let us prove that condition (i) implies that $T$ is a path-like tree. Assume that there exist a divisor $d_1$ of $s$, a divisor $d_2$ of $t$ and $\delta\in\mathbb{Z}$, with $2\le \delta\le \min\{d_1,d_2\}-1$ such that $d_1 -\delta=a-1$, $d_2-\delta=b-1$ and $r-1=(m-1)\delta$, where $s-a\ge a-1$ and $t-b\ge b-1$. Let $x$ and $y$ be such that $d_1 x=s$ and $d_2 y=t$, and define the subpaths:
$P^1_{k_1}, P^2_{k_2},\ldots, P^n_{k_n}$, where $n=x+y+m-1$, $k_1=k_2=\ldots=k_x=d_1$,  $k_{x+1}=k_{x+2}=\ldots=k_{x+m-1}=\delta$ and $k_{x+m}=k_{x+m+1}=\ldots=k_n=d_2$. Now, embed these subpaths in a horizontal line and introduce the edges between two consecutive paths $P^l_{k_l}$ and $P^{l+1}_{k_{l+1}}$, with $l\notin\{ x,x+m-1\}$, in such a way that no vertex of degree $3$ is created. Finally, we have to create the two vertices of degree $3$ in $P^x_{k_x}$ and $P^{x+m}_{k_{x+m}}$. We do this by joining $v^x_a$ with $v^{x+1}_{\delta}$, (and joining $v^{x+m-1}_{1}$ with $v^{x+m}_{\delta}$, when $m\ge 2$). Clearly, $d(v^x_a,v^x_{k_x})=\delta-1=d(v^{x+m}_1,v^{x+m}_{\delta})$, therefore, $T$ is a path-like tree.

\end{proof}

\begin{remark}\label{remark_divisors_brigde}
Let $T$ be a path-like tree of type $H$ with parameters $(s,t;r;a,b)$ and a bridge path $\beta$. By the previous theorem, we can associate to $T$, a divisor $d_1$ of $s$, a divisor $d_2$ of $t$ and a divisor $\delta$ of $r-1$, such that, according to the four cases of Theorem \ref{theo: characterization of a H with a bridge}:

\begin{itemize}
  \item[(i)] $2\le \delta\le \min\{d_1,d_2\}-1$, $d_1 -\delta=\min\{a-1,s-a\}$ and $d_2-\delta=\min\{b-1, t-b\}$.
  \item[(ii)] $d_1=s$, $d_2=t$ and $\delta\in \{a,s-a+1\}\cap \{b,t-b+1\}$.
   \item[(iii)] $d_1=s$, $d_2-\delta=\min\{b-1,t-b\}$ and $\delta\in \{a,s-a+1\}$.
  \item[(iv)] $d_1-\delta=\min\{a-1,s-a\}$, $d_2=t$ and $\delta\in \{b,t-b+1\}$.
\end{itemize}

We will say that $d_1$ and $d_2$ are horizontal divisors of $T$ and $\delta$ is a vertical divisor.
\end{remark}

\begin{theorem}\label{theo: characterization of a H with a indirect path}
Let $T$ be a tree of type $H$ with parameters $(s,t;r;a,b)$. Then, $T$ is a path-like tree with an indirect path if and only if 
 there exist $\phi_1\in \{a, s-a+1\}$, $\phi_2\in \{b,t-b+1\}$, a divisor $d_1$ of $s-\phi_1$ and a divisor $d_2$ of $t-\phi_2$ such that $d_1+\phi_1-1=d_2+\phi_2-1$ is a divisor of $r+1-d_1-d_2\ge 0$.
\end{theorem}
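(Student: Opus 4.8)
The plan is to follow the template of the proof of Theorem~\ref{theo: characterization of a H with a bridge}: I work inside a linear configuration of $T$ and read the divisibility conditions off the forced lengths of the subpaths, then reverse the reading to build a configuration in the sufficiency direction. Throughout I use the notation of model~(d): the two vertices of degree $3$ are $\omega_1=v^l_i\in P^l_{k_l}$ and $\omega_2=v^{l+m}_j\in P^{l+m}_{k_{l+m}}$, the vertical edge at $\omega_1$ descends to $P^{l-1}_{k_{l-1}}$, the indirect path $\mu$ leaves $\omega_1$ through the end $v^l_1$ and climbs to $v^{l+m}_{k_{l+m}}$ across $P^{l+1},\ldots,P^{l+m-1}$, and $\omega_2$ ascends to $P^{l+m+1}_{k_{l+m+1}}$. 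The engine of the whole argument is the single identity $r+1-d_1-d_2=(m-1)D$, where $D:=k_l$ and $d_1,d_2$ are the common lengths of the subpaths stacked below $\omega_1$ and above $\omega_2$; the statement of the theorem is precisely the translation of this identity together with the equalities $D=d_1+\phi_1-1=d_2+\phi_2-1$.

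For necessity I first pin down the geometry at $\omega_1$. Because $\omega_1$ already has degree $3$ (its two neighbours in $P^l$ plus the descending edge), the path $\mu$ cannot attach at $\omega_1$, so it must leave $P^l$ at the end $v^l_1$; the subpath of $P^l$ on the other side of $\omega_1$ is then a full tail of length $k_l-i$, ending at the leaf $v^l_{k_l}$. Applying condition~(\ref{eq: condi_union_consec_paths}) to the descending edge $v^{l-1}_\alpha v^l_i$, and using that no vertex other than $\omega_1,\omega_2$ may have degree $3$, forces $\alpha=1$ and $i=k_{l-1}$. By the argument proving Lemma~\ref{lemma: path lengths_plt_brigde}, the subpaths strictly below $\omega_1$, namely $P^1,\ldots,P^{l-1}$, all share one length $d_1$, so they realise the second tail of $\omega_1$, whose length is the multiple $(l-1)d_1$ of $d_1$. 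Writing $\phi_1-1:=k_l-i\in\{a-1,s-a\}$ (hence $\phi_1\in\{a,s-a+1\}$) and substituting $i=k_{l-1}=d_1$ gives $k_l=d_1+\phi_1-1$ and $d_1\mid(s-\phi_1)$. The mirror analysis at $\omega_2$ yields $\phi_2\in\{b,t-b+1\}$, $d_2\mid(t-\phi_2)$ and $k_{l+m}=d_2+\phi_2-1$. Finally, applying~(\ref{eq: condi_union_consec_paths}) to each climbing edge $v^{l+p}_1 v^{l+p+1}_{k_{l+p+1}}$ forces $k_l=k_{l+1}=\cdots=k_{l+m}=:D$, whence $d_1+\phi_1-1=d_2+\phi_2-1=D$; counting the edges of $\mu$ (the $i-1$ inside $P^l$, the $m$ vertical ones, the $(m-1)(D-1)$ inside the intermediate subpaths, and the $k_{l+m}-j$ inside $P^{l+m}$) and simplifying collapses to
\[
 r=d_1+d_2-1+(m-1)D,
\]
which is exactly $D\mid(r+1-d_1-d_2)$ with $r+1-d_1-d_2=(m-1)D\ge0$.

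For sufficiency I run the construction backwards. Given the data, set $D:=d_1+\phi_1-1=d_2+\phi_2-1$ and $m:=1+(r+1-d_1-d_2)/D\ge1$, and embed on a horizontal line $(s-\phi_1)/d_1$ subpaths of length $d_1$, then $m+1$ subpaths of length $D$, then $(t-\phi_2)/d_2$ subpaths of length $d_2$. I then join consecutive subpaths according to~(\ref{eq: condi_union_consec_paths}) so that each of the two stacks closes up into a single tail and no auxiliary vertex of degree $3$ is created, placing $\omega_1$ in the first length-$D$ subpath at distance $\phi_1-1$ from its leaf end and $\omega_2$ in the last length-$D$ subpath at distance $\phi_2-1$ from its leaf end, and threading the intermediate length-$D$ subpaths end-to-end to form the middle of $\mu$. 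The same edge count as above shows that the four tails have lengths $a-1,s-a,b-1,t-b$ and that $d_T(\omega_1,\omega_2)=r$, so by Theorem~\ref{theo: plt iff a linear config. exists} this configuration realises $T$ as a path-like tree with an indirect path.

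The step I expect to be most delicate, and would treat most carefully, is the necessity claim that every subpath of $\mu$ has length exactly $D=k_l$. It rests on applying~(\ref{eq: condi_union_consec_paths}) along each edge $v^{l+p}_1v^{l+p+1}_{k_{l+p+1}}$ together with the fact that $\omega_1,\omega_2$ are the only degree-$3$ vertices, which is what forces $\mu$ to enter and leave every intermediate subpath at opposite ends; keeping precise track of which end is used at each step, so that the two stacks really close into tails rather than spawning new branch vertices, is where careless indexing would corrupt the edge count. Once the lengths are fixed, the rest is the bookkeeping that produces the displayed identity.
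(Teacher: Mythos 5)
Your proposal is correct and follows essentially the same route as the paper: necessity by forcing the subpath lengths in a linear configuration (the stacks below $\omega_1$ and above $\omega_2$ of common lengths $d_1$, $d_2$, and the equal-length middle subpaths via condition~(\ref{eq: condi_union_consec_paths})), and sufficiency by the same explicit construction with $(s-\phi_1)/d_1$ copies of $P_{d_1}$, $m+1$ copies of $P_{d_1+\phi_1-1}$, and $(t-\phi_2)/d_2$ copies of $P_{d_2}$. The only difference is presentational: you make explicit the edge count $r=d_1+d_2-1+(m-1)(d_1+\phi_1-1)$ that yields the divisibility condition, which the paper leaves implicit.
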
  
\begin{proof}
Let $T$ be a path-like tree which contains an indirect path. Assume that the linear configuration of the path-like tree, with subpaths $P^1_{k_1},P^2_{k_2}, \ldots, P^n_{k_n}$, contains an indirect path $\mu$ and let $v_a^{l}$ and $v_b^{l+m}$ be the only vertices of degree $3$, for $2\le l<l+m< n$. It follows that $k_{l-1}=a$, since the edge that joins $v^l_a$ with the vertex of $P^{l-1}_{k_{l-1}}$ has to be incident with a terminal vertex of $P^{l-1}_{k_{l-1}}$, for if not, some vertex of $P^{l-1}_{k_{l-1}}$ would have degree $3$ in $T$, and this is impossible since all vertices of degree $3$ in $T$ are $v^l_a$ and $v_b^{l+m}$. A similar reasoning shows that $k_{1}=k_{2}=\ldots =k_{l-1}=a$. In a similar way, we have that $k_{l+m+1}=k_{l+m+2}=\ldots =k_{n}=k_{l+m}-b+1$, and $k_l=k_{l+1}=k_{l+2}=\ldots =k_{l+m}$. Depending on our choice of $a,b,s-a,t-b$, we have proven one of the four possibilities of the theorem.

Let us prove the converse. Suppose that there exist $\phi_1\in\{a,s-a+1\}$, $\phi_2\in\{b,t-b+1\}$, a divisor $d_1$ of $s-\phi_1$ and a divisor $d_2$ of $t-\phi_2$ such that, $d_1+\phi_1-1=d_2+\phi_2-1$ is a divisor of $r+1-d_1-d_2$.  Let $P^1_{k_1}\cong P^2_{k_2}\cong\ldots \cong P^{x}_{k_{x}}\cong P_{d_1}$,  $P^{x+1}_{k_{x+1}}\cong P^{x+2}_{k_{x+2}}\cong\ldots \cong P^{x+m+1}_{k_{x+m+1}}\cong P_{d_1+\phi_1-1}$ and $P^{x+m+2}_{k_{x+m+2}}\cong P^{x+m+3}_{k_{x+m+3}}\cong\ldots \cong P^{x+m+y+1}_{k_{x+m+y+1}}\cong P_{d_2}$, where $x,y$ and $m-1$ are integers such that $s-\phi_1=xd_1$, $t-\phi_2=yd_2$ and $r+1-d_1-d_2=(m-1)(d_1+\delta-1)$. Next, embed the paths on a horizontal line as described in the definition of the linear configuration. Now, the vertices of degree $3$ are defined when we join a internal vertex of $P^{x+1}_{k_{x+1}}$, namely, $v^{x+1}_{k_{x+1}-\phi_1+1}$ with and end vertex of $P^x_{k_x}$, and also an end vertex of $P^{x+m+2}_{k_{x+m+2}}$ with an internal vertex of $P^{m+x+1}_{k_{m+x+1}}$, namely $v^{m+x+1}_{\phi_2}$. The rest of the edges are introduced in such a way that we do not create any more vertices of degree $3$. Notice that, there is only one possible way to do this. It is easy to see that the tails of the $H$-type tree are as requested, and that the tree is in fact a path-like tree.

\end{proof}
\begin{remark}\label{remark_divisors_indirect}
Let $T$ be a path-like tree of type $H$ with parameters $(s,t;r;a,b)$ and an indirect path $\mu$. By the previous theorem, we can associate to $T$, a divisor $d_1$ of $s-\phi_1$, a divisor $d_2$ of $t-\phi_2$ and a divisor $\delta=d_1+\phi_1-1=d_2+\phi_2-1$ of $r+1-d_1-d_2$, where $\phi_1\in\{a,s-a+1\}$ and $\phi_2\in\{b,t-b+1\}$.

We will say that $d_1$ and $d_2$ are horizontal divisors of $T$ and $\delta$ is a vertical divisor.
\end{remark}

\begin{theorem}\label{theo: characterization of a H with a semi-indirect path}
Let $T$ be a tree of type $H$ with parameters $(s,t;r;a,b)$. Then, $T$ is a path-like tree which contains a semi-indirect path if and only if one of the following holds:
\begin{itemize}
  \item[(i)] There exist $\phi_1\in\{a,s-a+1\}$, a divisor $d_1$ of $s-\phi_1$ and a divisor $d_2$ of $t$ such that, $d_1+\phi_1-1$ is a divisor of $r-d_1$, and $d_2-(d_1+\phi_1-1)\in \{b-1,t-b\}$.
  \item[(ii)] There exist $\phi_2\in\{b,t-b+1\}$, a divisor $d_2$ of $t-\phi_2$ and a divisor $d_1$ of $s$ such that, $d_2+\phi_2-1$ is a divisor of $r-d_2$, and $d_1-(d_2+\phi_2-1)\in \{a-1,s-a\}$.
\end{itemize}
\end{theorem}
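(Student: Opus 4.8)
The plan is to follow verbatim the scheme used for Theorems \ref{theo: characterization of a H with a bridge} and \ref{theo: characterization of a H with a indirect path}, exploiting the fact that a semi-indirect path is ``half bridge, half indirect''. First I would observe that the two models of semi-indirect path, $\sigma^-$ (case (e)) and $\sigma^+$ (case (f)), are interchanged by reading the linear configuration from right to left: this reversal yields a valid linear configuration of the same tree but exchanges the two tails, i.e.\ it swaps $(s,a)\leftrightarrow(t,b)$ and $d_1\leftrightarrow d_2$, and hence turns condition (i) into condition (ii). So it suffices to treat one type, say $\sigma^-$, and prove that it is equivalent to condition (i); condition (ii) then follows by this symmetry applied to $\sigma^+$.

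For necessity, fix a linear configuration $P^1_{k_1},\ldots,P^n_{k_n}$ of $T$ with $\omega_1\in P^l_{k_l}$ and $\omega_2\in P^{l+m}_{k_{l+m}}$ the two vertices of degree $3$. As in the earlier proofs, the crucial observation is that every edge incident with $\omega_1$ or $\omega_2$ and leaving its path must meet a \emph{terminal} vertex of the neighbouring path, since otherwise a third vertex of degree $3$ would appear. In the $\sigma^-$ model the end at $\omega_1$ is indirect-like (its tail branches into the preceding path $P^{l-1}$, so $\omega_1$ sits inside a path whose near tail-arm has $\phi_1\in\{a,s-a+1\}$ vertices), while the end at $\omega_2$ is bridge-like (both tail-arms of $\omega_2$ stay inside $P^{l+m}$ and the connecting path reaches $\omega_2$ through a terminal vertex). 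Combining this with the distance condition \eqref{eq: condi_union_consec_paths} forces the paths of the first tail to a common length $d_1$, the strictly intermediate paths to a common length $\delta=d_1+\phi_1-1$, and, by reasoning analogous to Lemma \ref{lemma: path lengths_plt_brigde}, the paths of the second tail to a common length $d_2$. Counting vertices along each tail and along the connecting path then yields $d_1\mid s-\phi_1$, $d_2\mid t$, $\delta\mid r-d_1$ and $d_2-\delta\in\{b-1,t-b\}$, which is precisely condition (i).

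For sufficiency I would run this construction backwards, exactly as in the converse parts of the two previous theorems. Given the data of (i), put $x=(s-\phi_1)/d_1$ paths isomorphic to $P_{d_1}$, then a suitable number of intermediate paths isomorphic to $P_{\delta}$ (governed by $\delta\mid r-d_1$), then $y=t/d_2$ paths isomorphic to $P_{d_2}$; embed them on a horizontal line as in the definition of a linear configuration and join consecutive paths so that exactly two vertices of degree $3$ are created at distance $r$, placing $\omega_1$ inside the first intermediate path with a tail-arm of $\phi_1$ vertices (the indirect-like end) and joining the last intermediate path to the $d_2$-block through a terminal vertex (the bridge-like end). One then checks that the two tails have the prescribed lengths $s$ and $t$ and that the induced drawing is a genuine linear configuration, so that $T$ is a path-like tree carrying a semi-indirect path.

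The routine but delicate point — the real obstacle — is the bookkeeping that separates the two ends. One must verify that the indirect-like end genuinely produces the offset $\phi_1\in\{a,s-a+1\}$ together with $d_1\mid s-\phi_1$ and $\delta\mid r-d_1$, whereas the bridge-like end produces the undivided $d_2\mid t$ together with $d_2-\delta\in\{b-1,t-b\}$, and that no configuration is counted twice across the two choices of $\phi_1$ and the two admissible values of $d_2-\delta$. Keeping the indices straight through the distance condition \eqref{eq: condi_union_consec_paths} — in particular pinning down the exact position of $\omega_2$ inside its path of length $d_2$ — is where all the care is needed; once that is done, the equivalence with condition (i), and hence by reversal with (ii), is immediate.
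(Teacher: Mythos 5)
Your core machinery coincides with the paper's: necessity via the observation that an edge leaving the path of a degree-$3$ vertex must meet a terminal vertex of the neighbouring path (forcing $k_1=\cdots=k_{l-1}=d_1$, $k_l=\cdots=k_{l+m-1}=d_1+\phi_1-1$ and $k_{l+m}=\cdots=k_n=d_2$), and sufficiency via the same three-block construction ($x=(s-\phi_1)/d_1$ copies of $P_{d_1}$, then $m$ copies of $P_{d_1+\phi_1-1}$ with $r-d_1=(m-1)(d_1+\phi_1-1)$, then $y=t/d_2$ copies of $P_{d_2}$). But your organizing symmetry is misattributed, and with it the asserted equivalences ``$\sigma^-\Leftrightarrow$ (i)'' and ``$\sigma^+\Leftrightarrow$ (ii)'' are false. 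Reversing a linear configuration does exchange $\sigma^-$ and $\sigma^+$, but it does \emph{not} swap $(s,a)\leftrightarrow(t,b)$: conditions (i) and (ii) are stated intrinsically in the tree's parameters, hence are invariant under reversal; what turns (i) into (ii) is the relabelling $(s,t;r;a,b)\mapsto(t,s;r;b,a)$, which is independent of the reading direction. The true dichotomy is governed by which tail of $T$ occupies the indirect-like end: a $\sigma^-$ whose left-hand (spread-out) tail is the $t$-tail yields (ii), not (i). Concretely, the tree with $(s,t;r;a,b)=(6,5;2;3,3)$ admits the $\sigma^-$ configuration $P^1\cong P_2$, $P^2\cong P_4$, $P^3\cong P_6$ with extra edges $v^1_1v^2_2$ and $v^2_1v^3_4$ (both satisfying the distance condition); here (ii) holds with $\phi_2=3$, $d_2=2$, $\delta=4$, $d_1=6$, while (i) fails: $\{b-1,t-b\}=\{2\}$ and $d_2'\mid 5$ force $d_2'=5$, $\delta'=3$, hence $\phi_1=3$, $d_1'=1$, and $3\nmid r-d_1'=1$. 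So the sentence in your necessity argument asserting that the near tail-arm at $\omega_1$ has $\phi_1\in\{a,s-a+1\}$ vertices silently assumes the left tail is the $u$-path, and your plan, executed literally, breaks on this example.

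The gap is local and the repair is exactly the paper's move: treat $\sigma^-$ only (your reversal remark \emph{does} correctly reduce $\sigma^+$ to $\sigma^-$, a point the paper leaves implicit, since its necessity argument mentions only $\sigma^-$), and then conclude ``(i) or (ii)'' according to whether the $s$-tail or the $t$-tail sits at the indirect-like end --- what the paper compresses into ``depending on our choice of $s,t,a,b,s-a,t-b$'' --- rather than claiming a bijection between $\{\sigma^-,\sigma^+\}$ and $\{\mathrm{(i)},\mathrm{(ii)}\}$. Your sufficiency direction is unaffected: the block construction realizes (i) by a $\sigma^-$, and (ii) by the same construction with the tails interchanged (again a $\sigma^-$, whose reversal is a $\sigma^+$), matching the paper's appeal to the symmetry of (i) and (ii) and its two subcases $d_2-\delta=b-1$ and $d_2-\delta=t-b$.
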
  

\begin{proof}
Let $T$ be a path-like tree. Assume that the linear configuration of the path-like tree, with subpaths $P^1_{k_1},P^2_{k_2}, \ldots, P^n_{k_n}$, contains a semi-indirect path $\sigma^-$ and let $v_a^{l}$ and $v_\delta^{l+m}$ be the only vertices of degree $3$, for $2\le l<l+m\le n$. It follows that $k_{l-1}=a$, since the path that joins $v^l_a$ with the vertex of $P^{l-1}_{k_{l-1}}$ has to be incident with a terminal vertex of $P^{l-1}_{k_{l-1}}$, for if not, some vertex of $P^{l-1}_{k_{l-1}}$ would have degree $3$ in $T$, and this is impossible since all vertices of degree $3$ in $T$ are $v^l_a$ and $v^{l+m}_\delta$. A similar reasoning shows that $k_{1}=k_{2}=\ldots =k_{l-1}=a$. In a similar way, we have that $k_{l+m}=k_{l+m+1}=\ldots =k_{n}$, and $k_l=k_{l+1}=k_{l+2}=\ldots =k_{l+m-1}$. Depending on our choice of $s,t,a,b,s-a,t-b$, we have proven either (i) or (ii).

Let us prove the converse. That is, any tree of type $H$ with parameters $(s,t;r;a,b)$
verifying one of the conditions described in the statement of the theorem is a path-like tree. It is clear that conditions (i) and (ii) are symmetric. Hence, we will just show that if condition (i) holds then $T$ is a path-like tree.
Suppose that there exist $\phi_1\in\{a,s-a+1\}$, a divisor $d_1$ of $s-\phi_1$ and a divisor $d_2$ of $t$ such that, $d_1+\phi_1-1$ is a divisor of $r-d_1$,  and either $d_2-b+1=d_1+\phi_1-1$ or $d_2-t+b=d_1+\phi_1-1$. Suppose first that $d_2-b+1=d_1+\delta-1$. Let $P^1_{k_1}\cong P^2_{k_2}\cong\ldots \cong P^{x}_{k_{x}}\cong P_{d_1}$,  $P^{x+1}_{k_{x+1}}\cong P^{x+2}_{k_{x+2}}\cong\ldots \cong P^{x+m}_{k_{x+m}}\cong P_{d_1+\delta-1}$ and $P^{x+m+1}_{k_{x+m+1}}\cong P^{x+m+2}_{k_{x+m+2}}\cong\ldots \cong P^{x+m+y}_{k_{x+m+y}}\cong P_{d_2}$, where $x,y$ and $m-1$ are integers such that $s-\phi_1=xd_1$, $t=yd_2$ and $r-d_1=(m-1)(d_1+\phi_1-1)$. Next, embed the paths on a horizontal line as described in the definition of the linear configuration. Now, the path that joins the vertices of degree $3$, joins a internal vertex of $P^{x+1}_{k_{x+1}}$, namely, $v^{x+1}_{k_{x+1}-\phi_1+1}$ with and end vertex of $P^x_{k_x}$, and also an end vertex of $P^{x+m}_{k_{x+m}}$ with an internal vertex of $P^{m+x+1}_{k_{m+x+1}}$, namely $v^{m+x+1}_{k_{m+x+1}-b+1}$. The rest of the edges are introduced in such a way that we do not create any more vertices of degree $3$. Notice that, there is only one possible way to do this. 
It is easy to see that the tails of the $H$-type tree are as requested, and that the tree is in fact a path-like tree. A similar construction works when we assume that  $d_2-t+b=d_1+\delta-1$.
\end{proof}

\begin{remark}\label{remark_divisors_semi-indirect}
Let $T$ be a path-like tree of type $H$ with parameters $(s,t;r;a,b)$ and a semi-indirect path $\sigma$. According to the two cases of the previous theorem, we can associate to $T$: 
\begin{enumerate}
\item[(i)] Either, a divisor $d_1$ of $s-\phi_1$, a divisor $d_2$ of $t$ and a divisor $\delta= d_1+\phi_1-1$ of $r-d_1$, where $\phi_1\in\{a,s-a+1\}$ and $d_2-\delta\in\{b-1,t-b\}$.
\item[(ii)] Or, a divisor $d_1$ of $s$, a divisor $d_2$ of $t-\phi_2$ and a divisor $\delta=d_2+\phi_2-1$ of $r-d_2$, where $\phi_2\in\{b,t-b+1\}$ and $d_1-\delta\in\{a-1,s-a\}$.
\end{enumerate}

We will say that $d_1$ and $d_2$ are horizontal divisors of $T$ and $\delta$ is a vertical divisor.
\end{remark}
\subsection{Trees of type cutted $H$}\label{subsection: cutted $H$}
A tree of type cutted $H$, simply denoted as $cH$ is a tree such that
the degree sequence of the tree has the following form: $1,1,1,2,2,\ldots, 2,3$. In other words, a tree of type $cH$ is a tree, different from a path, obtained by joining two vertices of two different paths, one of them being a leave and the other one not. More precisely, we say that $T$ is a {\it tree of type $cH$ with parameters $(s,t;a)$} if $V(T)=\{u_1,\ldots,u_s,v_1,\ldots,v_t\}$ such that $1< a< s$ and $E(T)=\{u_av_1\}\cup \{u_1u_2,u_2u_3,\ldots,u_{s-1}u_s\}\\ \cup\{v_1v_2,v_2v_3,\ldots,v_{t-1}v_t\}.$

Now, we will characterize path-like trees of type $cH$.

Let $T$ be a path-like tree of type $cH$. Then, a linear configuration contains two consecutive paths $P^{l}_{k_l}$ and $ P^{l+1}_{k_{l+1}}$ and one edge of the form either $v^{l}_iv^{l+1}_{k_{l+1}}$ or  $v^{l}_1v^{l+1}_{j}$, for $1<i<k_l$, $1<j<k_{l+1}$.

Then, we have the following theorem.

\begin{theorem}
Let $T$ be a tree of type $cH$ with parameters $(s,t;a)$. Then, $T$ is a path-like tree if and only if one of the following holds:
\begin{itemize}
  \item[(i)] There exist a divisor $\alpha$ of $s$, a divisor $\beta$ of $t$ such that $\alpha -\beta=a-1$, where $s-a\ge a-1$.
  \item[(ii)] Either $s-a+1=t$ or $a=t$.
   \item[(iii)] Either $a$ or  $s-a+1$ is a divisor of $t$.
\end{itemize}
\end{theorem}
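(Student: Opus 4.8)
The plan is to follow the template established by Proposition~\ref{theo: charac plt type H with pi or gamma} and Theorems~\ref{theo: characterization of a H with a bridge}--\ref{theo: characterization of a H with a semi-indirect path}: prove necessity by dissecting an arbitrary linear configuration of $T$, and prove sufficiency by writing down an explicit configuration realizing each of (i)--(iii). Throughout I invoke Theorem~\ref{theo: plt iff a linear config. exists}, so that ``path-like'' may be replaced by ``admits a linear configuration.''

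For necessity, assume $T$ is drawn in linear configuration with subpaths $P^1_{k_1},\ldots,P^n_{k_n}$, and let $w$ be its unique vertex of degree $3$. By the analysis preceding the statement, $w$ lies in one of two consecutive paths and the corresponding edge is either $v^l_iv^{l+1}_{k_{l+1}}$ with $1<i<k_l$ (then $w=v^l_i$) or $v^l_1v^{l+1}_j$ with $1<j<k_{l+1}$; since reading the configuration from right to left interchanges the two, I treat only the former. The decisive remark---identical in spirit to the opening of the proof of Proposition~\ref{theo: charac plt type H with pi or gamma}---is that every other edge between consecutive paths must meet each of them in a terminal vertex, for otherwise a second vertex of degree $3$ would appear. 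Together with~\eqref{eq: condi_union_consec_paths}, this forces such a ``normal'' edge to join $v^p_1$ to $v^{p+1}_{k_{p+1}}$ with $k_p=k_{p+1}$; hence the configuration splits into a left block $P^1,\ldots,P^l$ of common length $\alpha:=k_l$ and a right block $P^{l+1},\ldots,P^n$ of common length $\beta:=k_{l+1}$, while the special edge contributes the relation $\alpha-i=\beta-1$. Tracing the three branches issuing from $w$ then yields leg lengths $i-1$ (inside $P^l$, ending at the free vertex $v^l_1$), $(\alpha-i)+(l-1)\alpha$ (zig-zagging through the left block), and $(n-l)\beta$ (zig-zagging through the right block).

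It remains to match this multiset against $\{a-1,\,s-a,\,t\}$ and to convert the equalities into divisibilities, which I would organize into the same three displayed cases used in Theorem~\ref{theo: characterization of a H with a bridge}. If $1<l$ and $l+1<n$, the two legs making up the $u$-part are $i-1$ and $(\alpha-i)+(l-1)\alpha$, so summing them gives $s=l\alpha$ while $t=(n-l)\beta$; combined with $\alpha-i=\beta-1$ this reads $\alpha\mid s$, $\beta\mid t$ and $\alpha-\beta=a-1$, which is (i), the hypothesis $s-a\ge a-1$ selecting $a-1$ as the short leg. If $l=1$ the two short legs both lie inside $P^1$, so $s=\alpha$ and the lone right-block leg gives $\beta\mid t$ with $\beta=s-a+1$ or $\beta=a$ according to the orientation of $P^1$, which is (iii); the extra collapse $n=2$ makes the right block a single path and forces $t=\beta\in\{s-a+1,a\}$, which is (ii).

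For sufficiency I would simply invert these computations. Under (i), put $l=s/\alpha$ and $n-l=t/\beta$, take $l$ copies of $P_\alpha$ followed by $n-l$ copies of $P_\beta$, embed them on a horizontal line, and join consecutive paths using only terminal vertices except for the single special edge $v^l_av^{l+1}_{k_{l+1}}$, whose legality is precisely $\alpha-a=\beta-1$; a direct check shows the resulting spider has tails $a-1$, $s-a$, $t$, and conditions (ii) and (iii) are the degenerate cases in which one, respectively both, blocks reduce to a single path. The step I expect to be most delicate is the bookkeeping in the necessity argument: once the terminal-incidence remark pins down $\alpha$, $\beta$ and the block sizes, one must verify that the three leg lengths can be identified with $\{a-1,s-a,t\}$ \emph{only} in the ways that generate (i)--(iii)---in particular keeping track of which branch carries the $t$-tail and checking that the normalization $s-a\ge a-1$ removes the residual left--right ambiguity without omitting a case.
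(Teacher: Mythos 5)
Your proposal is correct and takes essentially the same route as the paper's own proof: the same terminal-incidence observation forcing two blocks of constant path lengths $\alpha=k_1$, $\beta=k_n$, the same case split ($n=2$ yielding (ii), $l=1$ yielding (iii), interior $l$ yielding (i) via $\alpha-\beta=a-1$), and the same explicit block constructions ($s/\alpha$ copies of $P_\alpha$ followed by $t/\beta$ copies of $P_\beta$, with the single special edge $v^l_av^{l+1}_{k_{l+1}}$) for sufficiency. The only discrepancy is cosmetic: you phrase the interior case as ``$1<l$ and $l+1<n$,'' formally leaving $1<l=n-1$ uncovered, but your computation gives (i) there verbatim with $\beta=t$ (the paper's case $1<l<n$ absorbs it), and your distance bookkeeping $\alpha-i=\beta-1$ and the right-to-left symmetry reduction for the edge of the form $v^l_1v^{l+1}_j$ are, if anything, spelled out more carefully than in the paper.
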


\begin{proof}
Assume that $T$ is a path-like tree of type $cH$ with parameters $(s,t;a)$ and let $P^1_{k_1},P^2_{k_2}, \ldots, P^n_{k_n}$ be the subpaths of a linear configuration of the path-like tree.
Suppose first that $n=2$. Then, when we introduce the edge that joins a vertex of $P^1_{k_1}$ with $P^2_{k_2}$, property (ii) holds. Suppose now that $n\ge 3$ and let $l\in \{1,2,\ldots,n-1\}$ such that $v^{l}_av^{l+1}_{k_{l+1}}\in E(T)$, with $v^{l}_a\in P^l_{k_l}$ and $v^{l+1}_{k_{l+1}}\in P^{l+1}_{k_{l+1}}$. Similar to what happens in Lemma \ref{lemma: path lengths_plt_brigde}, it is easy to check that $k_1=k_2=\ldots=k_l$ and $k_{l+1}=k_{l+2}=\ldots=k_n$. Clearly, $k_1> k_n$, otherwise $T$ is a path, which is a contradiction, since it contains a vertex of degree $3$. We distinguish two cases.

\underline{Case $l=1$.} Since $v^{1}_av^{2}_{k_2}\in E(T)$, we obtain that, $s-a+1$ is a divisor of $t$. By replacing $s-a+1$ by $a$, we obtain (iii)

\underline{Case  $1<l<n$.} Let $v^{l}_av^{l+1}_{k_{l+1}}\in E(T)$. Then, it is clear that $k_1-{k_{l+1}}=a-1$. Thus, for $\alpha=k_1$, $\beta=k_n$ condition (i) holds, when $s-a\ge a-1$.

Let us prove the converse. That is, any tree of type $cH$ with parameters $(s,t;a)$ verifying one of the conditions described above is a path-like tree. It is clear that if condition (ii) holds, $T$ is a path-like tree. Now, we will show that if condition (iii) holds then $T$ is a path-like tree. Suppose that $s-a+1$ is a divisor of $t$ and let $x(s-a+1)=t$.

Let $P^1_{s}=v^1_1v^1_2\ldots v^1_s$ and
$P^2_{k_2}\cong P^3_{k_3}\cong\ldots \cong P^{1+x}_{k_{1+x}}\cong P_{s-a+1} =v_1v_2\ldots v_{s-a+1}$, where $t=(s-a+1)x$. Next, embed the paths on a horizontal line as described in the definition of the linear configuration. Now, the edge that joins the vertices of degree $3$, joins a vertex of $P^1_{k_1}$ with a vertex of $P^2_{k_2}$. The rest of the edges are introduced in such a way that we do not create any more vertices of degree $3$. Notice that, there is only one possible way to do this. Thus, now let us go back, in order to explain how the edge that joins a vertex of $P^1_{k_1}$ with a vertex of $P^2_{k_2}$ is introduced. It is enough to join vertex $v^1_a$ to vertex $v^2_{k_2}$. It is easy to see that the $cH$-type tree is in fact a path-like tree.

Finally, let us prove that condition (i) implies that $T$ is a path-like tree. Assume that there exist a divisor $\alpha$ of $s$, a divisor $\beta$ of $t$ such that $\alpha -\beta=a-1$, where $s-a\ge a-1$. Let $x$ and $y$ be such that $\alpha x=s$ and $\beta y=t$, and define the subpaths:
$P^1_{k_1}, P^2_{k_2},\ldots, P^{x+y}_{k_{x+y}}$, $k_1=k_2=\ldots=k_x=\alpha$ and $k_{x+1}=k_{x+2}=\ldots=k_{x+y}=\beta$. Now, embed these subpaths in a horizontal line and introduce the edges between two consecutive paths $P^l_{k_l}$ and $P^{l+1}_{k_{l+1}}$, with $l\neq x$, in such a way that no vertices of degree $3$ are created. Finally, we have to create the vertex of degree $3$ in $P^x_{k_x}$. We do this by joining $v^x_a$ with $v^{x+1}_{k_{x+1}}$. Clearly, $d(v^x_a,v^x_{k_x})=d(v^{x+1}_1,v^{x+1}_{k_{x+1}})$, therefore, $T$ is a path-like tree.
\end{proof}

\section{Path-like trees of maximum degree $3$}

Let $H_n=(h_1,h_2,\ldots,h_n)$, $R_n=(r_1,r_2,\ldots, r_{n})$, $A_n=(a_1,a_2,\ldots,a_n)$ and $B_n=(b_1,b_2,\ldots,b_n)$. We say that $T$ is a tree in $\mathcal{H}_n$ with parameters $(H_n;R_{n-1};A_{n-1},B_{n-1})$ if $T$ is a tree formed by a set of horizontal paths $\overline{P}^1_{h_1}, \overline{P}^2_{h_2}, \ldots, \overline{P}^n_{h_n}$ joined by vertical paths of lengths $r_1,r_2,\ldots, r_{n-1}$ such that every vertical path $P^i_{r_i+1}$ defines two subpaths of order $a_i$ and $b_i$ in $\overline{P}^i_{h_i}$ and $\overline{P}^{i+1}_{h_{i+1}}$, respectively as shows Fig. \ref{Fig: A tree of the form}.

\begin{figure}[h]
  \includegraphics[width=290pt]{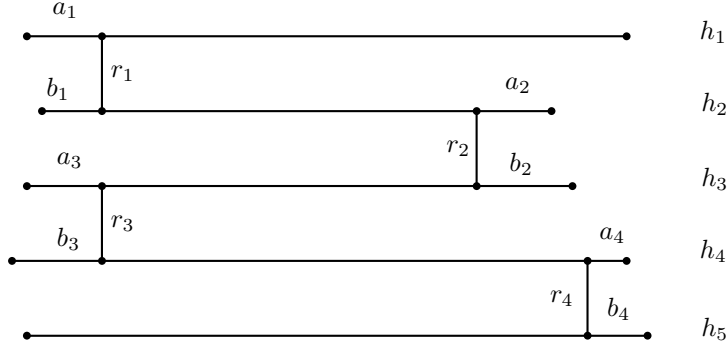}\\
  \caption{A tree in $\mathcal{H}_5$.}\label{Fig: A tree of the form}
\end{figure}

Let $P^1_{k_1}, P^2_{k_2}, \ldots,P^m_{k_m}$ be the paths defined by a linear configuration of a path-like tree.
Then every path $P_{k_i}^i$ contains at most, two vertices of degree $3$. We say that $T$ is a path-like tree of {\it type 1},
if it contains an even number of vertices of degree $3$.

The next lemma is an easy exercise. 

\begin{lemma}\label{lemma: plt of type 1_sequences}
Let $T$ be a path-like tree of type $1$ with maximum degree three. Then $T\in \mathcal{H}_{n}$, where $2(n-1)$ is the number of vertices of degree $3$. 
Moreover, every linear configuration of $T$ defines a sequence $(\alpha_1,\alpha_2,\ldots,\alpha_{n-1})$, where $\alpha_i\in\{\pi, \gamma,\beta,\mu,\sigma^+,\sigma^-\}$.
\end{lemma}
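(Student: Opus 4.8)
# Proof Proposal for Lemma 3.2

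The plan is to prove the two assertions separately, treating them as essentially bookkeeping statements that follow from structural facts already established in Section 2. The first assertion, that $T \in \mathcal{H}_n$ where $2(n-1)$ is the number of vertices of degree $3$, should follow by organizing the linear configuration of $T$ into maximal horizontal runs. First I would fix a linear configuration $P^1_{k_1}, P^2_{k_2}, \ldots, P^m_{k_m}$ of $T$. Since $T$ has maximum degree $3$ and is of type $1$, the number of degree-$3$ vertices is even, say $2(n-1)$, and these pair up naturally because each vertical connection between consecutive horizontal segments in the underlying normalized embedding (via Lemma \ref{lemma: every plt from norma embb} and Theorem \ref{theo: plt iff a linear config. exists}) contributes exactly two endpoints of degree $3$ — one on each horizontal path it joins. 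The idea is to group the paths $P^i_{k_i}$ into $n$ maximal horizontal paths $\overline{P}^1_{h_1}, \ldots, \overline{P}^n_{h_n}$, where each $\overline{P}^j_{h_j}$ is obtained by concatenating consecutive subpaths along a common row, and to recognize the $n-1$ vertical connections as vertical paths of lengths $r_1, \ldots, r_{n-1}$. Each such vertical path, by the defining condition \eqref{eq: condi_union_consec_paths}, splits its two incident horizontal paths into subpaths of orders $a_i$ and $b_i$, which is exactly the data $(H_n; R_{n-1}; A_{n-1}, B_{n-1})$ defining membership in $\mathcal{H}_n$.

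For the second assertion, the plan is to observe that the six models (a)--(f) described before Proposition \ref{theo: charac plt type H with pi or gamma} exhaust all possible local configurations of two consecutive degree-$3$ vertices in a linear configuration. Given the sequence of $2(n-1)$ degree-$3$ vertices read left to right along the linear configuration, I would pair the $(2j-1)$-th and $(2j)$-th such vertices as a consecutive pair $(\omega_1, \omega_2)$ and classify it according to the signs $(\epsilon_1, \epsilon_2) \in \{+,-\}^2$ indicating whether each vertex connects upward or downward in the configuration. The four sign patterns, together with whether the two vertices lie on the same horizontal path or on different ones, yield precisely the six types $\pi, \gamma, \beta, \mu, \sigma^+, \sigma^-$. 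Assigning to the $j$-th pair its type gives the sequence $(\alpha_1, \ldots, \alpha_{n-1})$ with $\alpha_j \in \{\pi, \gamma, \beta, \mu, \sigma^+, \sigma^-\}$.

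The main obstacle I anticipate is verifying that the pairing of degree-$3$ vertices is well-defined and that consecutive pairs do not interfere — that is, that reading the degree-$3$ vertices in order and grouping them two-by-two genuinely corresponds to the $n-1$ vertical joins, with no degree-$3$ vertex serving as an endpoint for two different joins. This is where the maximum-degree-$3$ hypothesis is essential: a vertex of degree $3$ is incident to exactly one non-horizontal (vertical/connecting) edge, so it belongs to exactly one vertical join, which prevents any ambiguity in the pairing. I would also need to confirm that between a paired $(\omega_1, \omega_2)$ there are no intervening degree-$3$ vertices belonging to other pairs, matching the stipulation ``with no vertices of degree $4$ between them'' adapted to the degree-$3$ setting; since all vertices are of degree at most $3$ and the two members of a join are the natural consecutive pair, this holds. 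Because the statement is explicitly flagged as ``an easy exercise,'' I expect each of these verifications to be short once the correspondence between vertical joins and degree-$3$ vertex pairs is set up cleanly, and the bulk of the work is simply marshalling the definitions from Section 2 rather than proving anything genuinely new.
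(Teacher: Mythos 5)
Your second and third paragraphs essentially reproduce the paper's proof: the paper orders the degree-$3$ vertices as $u_1,v_1,u_2,v_2,\ldots,u_{n-1},v_{n-1}$ along the linear configuration, pairs them consecutively, defines horizontal paths so that $u_i\in\overline{P}^i_{h_i}$ and $v_i\in\overline{P}^{i+1}_{h_{i+1}}$, and assigns to each pair one of $\pi,\gamma,\beta,\mu,\sigma^+,\sigma^-$ exactly according to the sign pattern $(\epsilon_1,\epsilon_2)$ together with whether the pair lies on one subpath $P^l_{k_l}$ or on two distinct subpaths $P^l_{k_l}$ and $P^{l+m}_{k_{l+m}}$. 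Your observation that a degree-$3$ vertex is incident to exactly one connecting edge, so that it can serve at most one join, is the right reason the pairing is well defined; the paper leaves this entirely implicit.

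However, the mechanism you offer in your first paragraph for the assertion $T\in\mathcal{H}_n$ is wrong on two counts, and taken literally it fails. First, it is not true that every vertical connection contributes exactly two endpoints of degree $3$: a connecting edge $v^l_iv^{l+1}_j$ creates a degree-$3$ vertex only at an endpoint that is \emph{internal} to its subpath, and in the bridge, indirect and semi-indirect models (c)--(f) all intermediate connecting edges attach an end vertex to an end vertex and create none. Since each degree-$3$ vertex lies on exactly one connecting edge, your count would give exactly $2(m-1)$ vertices of degree $3$, forcing the number $m$ of subpaths of the linear configuration to equal $n$; in general $m\ge n$, and the correct statement, recorded in the paper just before the lemma, is only that each $P^i_{k_i}$ contains at most two vertices of degree $3$. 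Second, the horizontal paths of the $\mathcal{H}_n$ structure are not concatenations of subpaths along common rows, nor are the vertical paths of $\mathcal{H}_n$ the vertical connections of the embedding: in the parallel and crossed models (a) and (b) the $\mathcal{H}_n$-vertical path joining the pair is a segment $v^l_i\ldots v^l_{i\pm r}$ of a single subpath --- horizontal in the configuration --- while the configuration's connecting edges lie on the $\mathcal{H}_n$-horizontal paths, which moreover typically terminate in partial rows. The repair is cheap and brings you back to the paper's argument: derive the first assertion from your own second paragraph, i.e., once the consecutive pairs are classified into models (a)--(f), each model supplies the $u_i$--$v_i$ path of length $r_i$ together with the attachment data $(a_i,b_i)$, and the horizontal paths $\overline{P}^i_{h_i}$ are then defined abstractly by the conditions $u_i\in\overline{P}^i_{h_i}$, $v_i\in\overline{P}^{i+1}_{h_{i+1}}$, exactly as the paper does, rather than by any row picture.
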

\begin{proof}
Let $P^1_{k_1}, P^2_{k_2}, \ldots,P^m_{k_m}$ be the paths defined by a linear configuration of $T$ and let $\{u_1,v_1,u_2,v_2,\ldots,u_{n-1},v_{n-1}\}$ be the set of ordered vertices  of degree $3$ (according to the order in a path that is incident to all vertices of degree $3$, that follows the linear configuration from left to right side). Let $w\in P^l_{k_l}$, we use  $w^+$ to indicate that $w$ is adjacent to a vertex in $P^{l+1}_{k_{l+1}}$ and $w^-$ to indicate that $w$ is adjacent to a vertex in $P^{l-1}_{k_{l-1}}$.
We  define a set of horizontal paths $\overline{P}^1_{h_1}, \overline{P}^2_{h_2}, \ldots, \overline{P}^n_{h_n}$ such that,  $u_i\in \overline{P}^i_{h_i}$ and $v_i\in \overline{P}^{i+1}_{h_{i+1}}$. Thus, we define $n-1$ vertical paths with end vertices $u_i$ and $v_i$, for   $i=1,2,\ldots,n-1$, each of them related to:
\begin{itemize}
\item  either a parallel path $\pi$  or a crossed path $\gamma$ when $u_i, v_i\in  P^l_{k_l}$;
\item or a bridge path $\beta$ when $u_i^+\in  P^l_{k_l}$ and $v_i^- \in  P^{l+m}_{k_{l+m}}$ and $m\ge 1$;
\item or an indirect path $\mu$ when $u_i^-\in  P^l_{k_l}$ and $v_i^+\in  P^{l+m}_{k_{l+m}}$ and $m\ge 1$;
\item or a semi-indirect path $\sigma^-$ when $u_i^-\in  P^l_{k_l}$ and $v_i^-\in  P^{l+m}_{k_{l+m}}$ and $m\ge 1$;
\item or a semi-indirect path $\sigma^+$ when $u_i^+\in  P^l_{k_l}$ and $v_i^+\in  P^{l+m}_{k_{l+m}}$ and $m\ge 1$.
\end{itemize}
\end{proof}

By the characterizations of path-like trees of type $H$ we will introduce a set of divisors $d_i$, $i=1,2,\ldots,n$ each of them related to $\overline{P}^i_{h_i}$ and also $\delta^j$, $j=1,2,\ldots,n-1$, related to the vertical paths.  Fig. \ref{Fig: A path-like tree}
and \ref{Fig: A path-like tree h_4}
show their construction in some of the cases. Theorem \ref{theo: a path-like-tree. Then...} shows how these divisors can be obtained.
\begin{figure}[h]

\includegraphics[width=391pt]{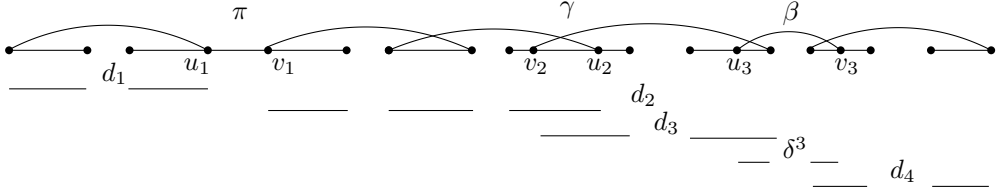}\\

  \caption{A path-like tree $T$.}\label{Fig: A path-like tree}
\end{figure}

\begin{figure}[h]
\begin{center}
  \includegraphics[width=133pt]{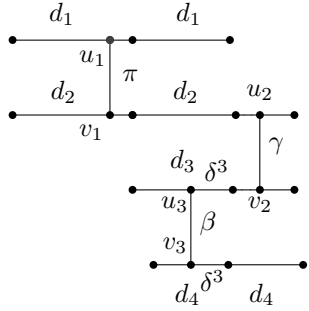}\\
  \caption{A path-like tree $T$ in  $\mathcal{H}_4$.}\label
{Fig: A path-like tree h_4}
\end{center}
\end{figure}

To simplify the notation, if we deal with a tree in $\mathcal{H}_n$ with parameters $(H_n;R_{n-1};A_{n-1},\\ B_{n-1})$ and $a_n$ appears in a formula, then we will assume that $a_n=0$.  Also, if we refer to a vertical divisor $\delta$ of a path of type $\pi$ or $\gamma$, we will assume that $\delta=1$, since no vertical divisor has been introduced in this case.

\begin{theorem}\label{theo: a path-like-tree. Then...}
Let $T$ be a path-like tree of type 1 with maximum degree three and set of ordered pairs of vertices (according to a linear configuration of the path-like tree) of degree $3$, $\{(u_i,v_i)\}_{i=1}^{n-1}$, sequence $(\alpha_1,\alpha_2,\ldots,\alpha_{n-1})$ and parameters $(H_n;R_{n-1};A_{n-1},B_{n-1})$. 

Assume that $v_iu_{i+1}\notin E(T)$, for $i=1,2,n-2$. Then,

\begin{itemize}
\item[(i)] the tree $T_1$ of type $H$ with parameters $(h_1,l_2;r_1;a_1,b_1)$, where $l_2=h_2$ if $\alpha_2\in \{\pi,\beta,\sigma^+\}$, and $l_2=h_2-a_2$, otherwise, is a path-like tree, with vertical path $\alpha_1$, horizontal divisors
 $d_1$ and $d_2$, respectively, and vertical divisor $\delta^1$,

\item[(ii)] for every $i\in \{2,3,\ldots, n-1\}$, the tree $T_i$ of type $H$ with parameters either $(d_i+a_i,l_{i+1};r_i;a_i,b_i)$  if $\alpha_i\in \{\pi,\gamma,\sigma^-,\mu\}$,  or, $(d_i,l_{i+1};r_i;a_i,b_i)$  if $\alpha_i\in \{\beta,\sigma^+\}$, where $l_{i+1}=h_{i+1}$ if either $i=n-1$ or $i<n-1$ and $\alpha_{i+1}\in \{\pi,\beta,\sigma^+\}$, and $l_{i+1}=h_{i+1}-a_{i+1}$, otherwise, is a path-like tree, with vertical path $\alpha_i$, horizontal divisors $d_i$ and $d_{i+1}$, respectively, and vertical divisor $\delta^i$.

\end{itemize}


\end{theorem}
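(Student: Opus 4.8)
The plan is to extract each subtree $T_i$ from one fixed linear configuration of the ambient tree $T$ and to recognize it as a path-like tree of type $H$. First I would fix a linear configuration of $T$, say with subpaths $P^1_{k_1},\ldots,P^m_{k_m}$, which exists because $T$ is path-like. By Lemma \ref{lemma: plt of type 1_sequences} this configuration exhibits $T$ as a tree in $\mathcal{H}_n$, produces the ordered pairs $(u_i,v_i)$ of degree-$3$ vertices, and determines the sequence $(\alpha_1,\ldots,\alpha_{n-1})$. For each $i$, the vertical path $\alpha_i$ together with the two horizontal paths $\overline{P}^i_{h_i}$ and $\overline{P}^{i+1}_{h_{i+1}}$ that it joins is carried by a contiguous block $P^{p_i}_{k_{p_i}},\ldots,P^{q_i}_{k_{q_i}}$ of the configuration containing $u_i$ and $v_i$; the precise endpoints of this block and its internal shape are dictated by which of the six models (a)--(f) describes $\alpha_i$. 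The non-adjacency hypothesis $v_iu_{i+1}\notin E(T)$ guarantees that the block for $\alpha_i$ and the block for $\alpha_{i+1}$ meet only inside the shared horizontal path $\overline{P}^{i+1}_{h_{i+1}}$, which is what makes a clean separation possible.

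The core step is to show that, after truncating its first and last subpaths so as to discard exactly the tails belonging to the neighbouring subtrees $T_{i-1}$ and $T_{i+1}$, this block is a valid linear configuration of a tree of type $H$ isomorphic to $T_i$. Because the incidence condition \eqref{eq: condi_union_consec_paths} holds for the full configuration of $T$, it persists for the truncated block; hence Theorem \ref{theo: plt iff a linear config. exists} applies and $T_i$ is a path-like tree. The hypothesis $v_iu_{i+1}\notin E(T)$ is essential again here: it ensures that the horizontal path $\overline{P}^{i+1}_{h_{i+1}}$, shared by $T_i$ and $T_{i+1}$, can be split cleanly between them, so that the truncation producing the second side of $T_i$ and the one producing the first side of $T_{i+1}$ do not collide.

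Finally I would read off the divisors and match the parameters. For each $\alpha_i$ I apply the relevant characterization to the block --- Proposition \ref{theo: charac plt type H with pi or gamma} when $\alpha_i\in\{\pi,\gamma\}$, Theorem \ref{theo: characterization of a H with a bridge} when $\alpha_i=\beta$, Theorem \ref{theo: characterization of a H with a indirect path} when $\alpha_i=\mu$, and Theorem \ref{theo: characterization of a H with a semi-indirect path} when $\alpha_i\in\{\sigma^+,\sigma^-\}$. The necessity parts of those results, together with Lemma \ref{lemma: path lengths_plt_brigde}, force all subpaths of the block lying in a given horizontal path to share a common length and all subpaths along $\alpha_i$ to share a common length; these common lengths are the horizontal divisors $d_i,d_{i+1}$ and the vertical divisor $\delta^i$, as recorded in Remarks \ref{remark_divisors_paths}, \ref{remark_divisors_brigde}, \ref{remark_divisors_indirect}, and \ref{remark_divisors_semi-indirect}. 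I then check that the first parameter of $T_i$ is $h_1$ for $i=1$, since $\overline{P}^1_{h_1}$ is unshared, and is $d_i+a_i$ or $d_i$ for $i\ge 2$ according to whether $\alpha_i\in\{\pi,\gamma,\sigma^-,\mu\}$ or $\alpha_i\in\{\beta,\sigma^+\}$, while the second parameter is $l_{i+1}$. The hard part will be the bookkeeping for this last identification: one must decide, across all six possibilities for $\alpha_{i+1}$, whether the tail of length $a_{i+1}$ of $\overline{P}^{i+1}_{h_{i+1}}$ is consumed by $T_i$, giving $l_{i+1}=h_{i+1}$, or is reserved for the first side of $T_{i+1}$, giving $l_{i+1}=h_{i+1}-a_{i+1}$; this is precisely the dichotomy $\alpha_{i+1}\in\{\pi,\beta,\sigma^+\}$ versus $\alpha_{i+1}\in\{\gamma,\mu,\sigma^-\}$ in the statement. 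Verifying it, and verifying the matching consistency that the divisor $d_i$ obtained as the second-side divisor of $T_{i-1}$ coincides with the first-side divisor of $T_i$, is the most delicate point of the argument.
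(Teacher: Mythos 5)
Your proposal is correct and takes essentially the same route as the paper's proof: the paper likewise obtains each $T_i$ by cutting the given linear configuration at block boundaries dictated by $\alpha_i$ and $\alpha_{i+1}$ (invoking $v_iu_{i+1}\notin E(T)$ exactly where you do, to make the split clean when the next vertical path lies in $\{\gamma,\mu,\sigma^-\}$), observes that each piece inherits a valid linear configuration and is therefore a path-like tree of type $H$, and reads the divisors $d_i$, $d_{i+1}$, $\delta^i$ off Proposition \ref{theo: charac plt type H with pi or gamma}, Theorems \ref{theo: characterization of a H with a bridge}, \ref{theo: characterization of a H with a indirect path}, \ref{theo: characterization of a H with a semi-indirect path} and Remarks \ref{remark_divisors_paths}--\ref{remark_divisors_semi-indirect}. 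The only cosmetic difference is that the paper peels the subtrees off iteratively from the left (via intermediate trees $\overline{T_i}$), and in the $\{\gamma,\sigma^-,\mu\}$ cases first retains a tail of length $h_{i+1}-a_{i+1}+d_{i+1}$ before using the non-adjacency hypothesis to pass to $l_{i+1}=h_{i+1}-a_{i+1}$, whereas you extract and truncate each block directly.
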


\begin{proof}
Let $P^1_{k_1}, P^2_{k_2}, \ldots,P^m_{k_m}$ be the consecutive paths defined by a linear configuration of $T$. 


If $\alpha_2\in\{\pi, \beta, \sigma^+\}$ then by removing the edge incident to $u_2$ that connects $u_2$ to $v_2$, we obtain two path-like trees $T_1$ and $T_1'$.  $T_1$ is a tree of type $H$ with parameters $(h_1, h_2; r_1; a_1,b_1)$, vertical path $\alpha_1$, horizontal divisors $d_1$ and $d_2$, respectively and vertical divisor $\delta^1$.  
If $\alpha_2\in \{\gamma,\sigma^-,\mu\}$ and $u_2\in P^s_{k_{s}}$, then, by removing the edge that connects $P^s_{k_{s}}$ to $P^{s+1}_{k_{s+1}}$ of the linear configuration, and 
the subpath of  $P^{s}_{k_{s}}$  with end vertices $v^s_{k_{s}}$ and
 $u_2$, 
we also obtain two path-like trees: $T_1$ and $T_1'$.
$T_1$ is a tree of type $H$ with parameters $(h_1, h_2-a_2+d_2; r_1; a_1,b_1)$, vertical path $\alpha_1$, horizontal divisors $d_1$ and $d_2$, respectively and vertical divisor $\delta^1$. Notice that, since we assume that $v_i$ is not adjacent to $u_{i+1}$, for $i=1,2,n-2$, when $\alpha_2\in \{\gamma,\sigma^-,\mu\}$, we can obtain the divisors of $T_1$ by considering a path-like tree of type $H$ with parameters  $(h_1, h_2-a_2; r_1; a_1,b_1)$ and vertical path (or edge) $\alpha_1$.

In general, for every pair of vertices $\{u_i,v_i\}$ of degree $3$, where $i\ge 2$ and  $u_i\in P^t_{k_{t}}$, if we delete either the edge connecting $P^{t-1}_{k_{t-1}}$ with $P^{t-2}_{k_{t-2}}$ when
$\alpha_i\in\{\pi,\gamma,\sigma^-,\mu\}$, or the edge connecting $P^{t}_{k_{t}}$ with $P^{t-1}_{k_{t-1}}$ when $\alpha_i\in\{\beta,\sigma^+\}$, we obtain two path-like trees. Let's call $\overline{T_i}$ the one that contains $P^t_{k_{t}}$, which is of type $1$. We now repeat the construction provided above, that is:


If $\alpha_{i+1}\in\{\pi, \beta, \sigma^+\}$ then by removing the edge incident to $u_{i+1}$ that connects $u_{i+1}$ to $v_{i+1}$, we obtain two path-like trees: $T_i$ and $T_i'$. $T_i$ is a tree of type $H$ with a vertical path (or edge) $\alpha_i$, parameters either $(d_i+a_i, h_{i+1}; r_i; a_i,b_i)$ when $\alpha_{i}\in\{\pi, \gamma, \sigma^-,\mu\}$, or $(d_i,h_{i+1}; r_i; a_i,b_i)$ when $\alpha_{i}\in\{\beta, \sigma^+\}$, horizontal divisors $d_i$ and $d_{i+1}$, respectively and vertical divisor $\delta^i$.  
If $\alpha_{i+1}\in \{\gamma,\sigma^-,\mu\}$ and $u_{i+1}\in P^s_{k_{s}}$, then, by removing the edge that connects $P^s_{k_{s}}$ to $P^{s+1}_{k_{s+1}}$ of the linear configuration, and 
the subpath of  $P^{s}_{k_{s}}$  with end vertices $v^s_{k_{s}}$ and $u_{i+1}$, we also obtain two path-like trees: $T_i$ and $T_i'$. $T_i$ is a tree of type $H$ with a vertical path (or edge) $\alpha_i$, parameters either $(d_i+a_i,h_{i+1}-a_{i+1}+d_{i+1}; r_i; a_i,b_i)$ when $\alpha_{i}\in\{\pi, \gamma, \sigma^-,\mu\}$, or $(d_i,h_{i+1}-a_{i+1}+d_{i+1}; r_i; a_i,b_i)$ when $\alpha_{i}\in\{\beta, \sigma^+\}$, with horizontal divisors $d_{i}$, $d_{i+1}$, respectively and vertical divisor $\delta^{i}$. Notice that, since we assume that $v_i$ is not adjacent to $u_{i+1}$, for $i=1,2,n-2$, when $\alpha_{i+1}\in \{\gamma,\sigma^-,\mu\}$, we can obtain the divisors of $T_i$ by considering $h_{i+1}-a_{i+1}$ instead of $h_{i+1}-a_{i+1}+d_{i+1}$.

\end{proof}

The next result characterizes which trees in $\mathcal{H}_n$ are path-like trees, under the assumption that in each horizontal path there are not two adjacent vertices of degree three.

\begin{theorem}\label{theo: a tree in H_n such that...Then path-like tree_good}
Let $T$ be a tree in $\mathcal{H}_n$ with parameters $(H_n;R_{n-1};A_{n-1},B_{n-1})$ with no horizontal path that contains two adjacent vertices of degree $3$. Suppose that the following conditions hold.
\begin{itemize}
\item[(i)] The tree $T_1$ of type $H$ with parameters  $(h_1,t_2;r_1;a_1,b_1)$, where $t_2$ is either $h_2$ or $h_2-a_2$ is a path-like tree with vertical path $\alpha_1$, horizontal divisors $d_1$ and $d_2$, and vertical divisor $\delta^1$.

\item[(ii)] For every $i\in \{2,3,\ldots, n-1\}$, the tree $T_i$ of type $H$, with parameters $(s_{i},t_{i+1}; r_{i};\\ a_{i},b_{i})$, where $t_{i+1}$ is  either $h_{i+1}$ or  $h_{i+1}-a_{i+1}$ is a path-like tree with vertical path $\alpha_{i}$, horizontal divisors $d_{i}$ and $d_{i+1}$, respectively, vertical divisor $\delta^{i}$, where $s_{i}\in\{d_{i}, d_{i}+a_{i}\}$ is defined according to the following rules:

\begin{itemize}
\item if $t_{i}=h_{i}$, then either $s_{i}=d_{i}+a_{i}$ and $\alpha_{i}=\pi$; or $s_{i}=d_{i}$ and $\alpha_{i}\in\{\beta,\sigma^+\}.$
\item if $t_{i}=h_{i}-a_{i}$, then $s_{i}=d_{i}+a_{i}$ and $\alpha_{i}\in\{\gamma,\mu, \sigma^-\}.$
\end{itemize}

\item[(iii)] For every $2\le i\le n-1$,
$$d_i=\left\{\begin{array}{ll}
b_{i-1}, & \hbox{if } \alpha_{i-1}=\pi\\
a_{i-1}+r_{i-1}, & \hbox{if } \alpha_{i-1}=\gamma, i\neq 2\\
a_{1}+r_{1}, \hbox{ or }  h_1-a_1+1+r_{1},& \hbox{if } \alpha_{1}=\gamma, i=2\\
\delta^{i-1}+b_{i-1} -1,& \hbox{if } \alpha_{i-1}=\beta,\sigma^-, \delta^{i-1}\neq b_{i-1}\\
h_{i},& \hbox{if } \alpha_{i-1}=\beta,\sigma^-, \delta^{i-1}= b_{i-1}\\
\delta^{i-1}-b_{i-1} +1,& \hbox{if } \alpha_{i-1}=\mu,\sigma^+\\
\end{array}\right.
$$
and
$$
 d_i=\left\{\begin{array}{ll}
a_i, & \hbox{if } \alpha_{i}=\pi\\
b_i+r_i, & \hbox{if } \alpha_{i}=\gamma, i\neq n-1\\
b_{n-1}+r_{n-1},  \hbox{ or } h_{n}-b_{n-1}+1+r_{n-1},  & \hbox{if } \alpha_{n-1}=\gamma, i= n-1\\
\delta^{i}+a_{i} -1,& \hbox{if } \alpha_{i}=\beta,\sigma^+, \delta^{i}\neq a_{i} \\
h_{i},& \hbox{if } \alpha_{i}=\beta,\sigma^+, \delta^{i}= a_{i}\\
\delta^{i}-a_{i} +1,& \hbox{if } \alpha_{i}=\mu,\sigma^-\\
\end{array}\right.$$
\item[(iv)] And, whenever $\alpha_i\in\{\beta,\sigma^+\}$, if either $\alpha_{i-1}\in \{\pi,\gamma,\mu,\sigma^+\}$ or, $\alpha_{i-1}\in \{\beta,\sigma^-\}$ and $d_{i}<h_i$ then $d_i=\delta^{i}+a_{i} -1$. Otherwise, if $\alpha_{i-1}\in \{\beta,\sigma^-\}$ and $d_{i}=h_i$ then, either $\delta^{i-1}=b_{i-1}$ and $\delta^i=a_i$ or $\delta^{i-1}=h_i-b_{i-1}+1$ and $\delta^i=h_i-a_i+1$.
\end{itemize}
Then $T$ is a path-like tree.
\end{theorem}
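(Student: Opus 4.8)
The plan is to construct an explicit linear configuration of $T$ by concatenating the linear configurations of the type-$H$ pieces $T_1,T_2,\ldots,T_{n-1}$, and then to invoke Theorem~\ref{theo: plt iff a linear config. exists} to conclude that $T$ is a path-like tree. The underlying decomposition of $T$ is the one implicit in the hypotheses: each piece $T_i$ records the $i$-th vertical path $\alpha_i$ together with the two horizontal paths it joins, and consecutive pieces $T_i$ and $T_{i+1}$ overlap exactly in the horizontal path $\overline{P}^{i+1}_{h_{i+1}}$, which carries the degree-$3$ vertices $v_i$ (from $T_i$, with tail $b_i$) and $u_{i+1}$ (from $T_{i+1}$, with tail $a_{i+1}$). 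The hypothesis that no horizontal path contains two adjacent degree-$3$ vertices guarantees that these two vertices never collide.

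First I would use hypotheses (i) and (ii), which assert that each $T_i$ is a path-like tree of type $H$ with the displayed vertical path $\alpha_i$, horizontal divisors $d_i,d_{i+1}$, and vertical divisor $\delta^i$. The appropriate characterization then supplies an explicit linear configuration for $T_i$: Proposition~\ref{theo: charac plt type H with pi or gamma} when $\alpha_i\in\{\pi,\gamma\}$, Theorem~\ref{theo: characterization of a H with a bridge} when $\alpha_i=\beta$, Theorem~\ref{theo: characterization of a H with a indirect path} when $\alpha_i=\mu$, and Theorem~\ref{theo: characterization of a H with a semi-indirect path} when $\alpha_i\in\{\sigma^+,\sigma^-\}$. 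In every such configuration the subpaths to the left of the vertical path are blocks governed by $d_i$ and those to the right are blocks governed by $d_{i+1}$; I would record precisely how the shared path $\overline{P}^{i+1}_{h_{i+1}}$ appears, in terms of $d_{i+1}$, both at the right end of the configuration of $T_i$ and at the left end of the configuration of $T_{i+1}$, keeping track of the truncation rule $t_{i+1}\in\{h_{i+1},h_{i+1}-a_{i+1}\}$ and the definition $s_{i+1}\in\{d_{i+1},d_{i+1}+a_{i+1}\}$ from (ii).

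Next I would verify that, for each $2\le i\le n-1$, the shared path $\overline{P}^{i}_{h_{i}}$ is described consistently by $T_{i-1}$ and $T_i$, so that their configurations can be joined along it. This is exactly the content of hypothesis (iii): its first block computes $d_i$ from the right end of $T_{i-1}$ (according to $\alpha_{i-1}$) and its second block computes the same $d_i$ from the left end of $T_i$ (according to $\alpha_i$), and the required equality of the two values is the matching condition. Combined with the truncation rule and the choice of $s_i$, this ensures that the portion of $\overline{P}^{i}_{h_{i}}$ used at the right of $T_{i-1}$ and the portion used at the left of $T_i$ together cover $\overline{P}^{i}_{h_{i}}$ exactly once, with $v_{i-1}$ and $u_i$ placed compatibly. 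Concatenating all pieces and identifying each shared horizontal path yields a single sequence of horizontal subpaths embedded on a line together with the vertical edges realizing the $\alpha_i$; since each local configuration satisfies the distance condition~\eqref{eq: condi_union_consec_paths} and the matching preserves it across every interface, the global configuration is a valid linear configuration producing $T$ with no degree-$3$ vertices other than the intended $u_i,v_i$. Theorem~\ref{theo: plt iff a linear config. exists} then finishes the proof.

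The hard part will be the gluing at an interface $i$ where $\alpha_i\in\{\beta,\sigma^+\}$. For these types the shared path $\overline{P}^{i}_{h_{i}}$ may be forced to become a single undivided block (the subcase $d_i=h_i$), and then the two vertical divisors $\delta^{i-1}$ and $\delta^i$ meeting at $\overline{P}^{i}_{h_{i}}$ must be coordinated so that $v_{i-1}$ (tail $b_{i-1}$) and $u_i$ (tail $a_i$) sit consistently and non-adjacently. Hypothesis (iv) performs exactly this coordination: away from the block subcase it forces $d_i=\delta^{i}+a_i-1$, in agreement with the second block of (iii), while in the subcase $d_i=h_i$ it pins down the only two admissible pairings, namely $(\delta^{i-1},\delta^i)=(b_{i-1},a_i)$ and $(\delta^{i-1},\delta^i)=(h_i-b_{i-1}+1,\,h_i-a_i+1)$, corresponding to the two orientations in which the two vertical paths can emanate from the ends of $\overline{P}^{i}_{h_{i}}$. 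Checking that one of these two pairings is always realizable, and that in the remaining types $\alpha_i\in\{\pi,\gamma,\mu,\sigma^-\}$ the matching of (iii) already subdivides $\overline{P}^{i}_{h_{i}}$ unambiguously so that the concatenation goes through directly, is the technical core of the argument.
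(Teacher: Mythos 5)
Your proposal is correct and follows essentially the same route as the paper's own proof: the paper likewise builds a linear configuration of $T$ iteratively, gluing the configuration of each $H$-piece $T_{i+1}$ onto the accumulated configuration $T_i^*$ by identifying the terminal block(s) of size $d_{i+1}$ (with cases according to $\alpha_{i+1}\in\{\pi,\gamma\}$, $\{\beta,\sigma^+\}$, $\{\mu,\sigma^-\}$, exactly the ``glue conditions'' (iii) and (iv) you single out), and concludes via Theorem~\ref{theo: plt iff a linear config. exists}. Your reading of (iii) as a two-sided matching condition on $d_i$ and of (iv) as coordinating $\delta^{i-1},\delta^i$ in the degenerate block case $d_i=h_i$ is precisely the role these hypotheses play in the paper's argument.
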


\begin{proof}
The proof is a constructive proof. We will show that if there exists such a sequence of path-like trees satisfying the ``glue conditions'' in (iii) and (iv), we can obtain a linear configuration of $T$, which implies by Theorem \ref{theo: plt iff a linear config. exists} that $T$ is a path-like tree.

We start by obtaining a linear configuration of a subgraph of $T$ with parameters $(H'_3;R_{2};A_{2},B_{2})$, where, $h_j'=h_j$, $j=1,2$ and $h_3'=t_3.$ We will denote such a tree by $T_2^*$.
Let $P^{1,1}_{k_1}, P^{1,2}_{k_2},\ldots,P^{1,m}_{k_m}$ and $P^{2,1}_{k'_1}, P^{2,2}_{k'_2},\ldots,P^{2,m'}_{k'_{m'}}$ be the paths defined by a linear configuration of $T_1$ and $T_2$, respectively, with $|V(P^{1,m}_{k_m})|=|V(P^{2,1}_{k'_1})|=d_2$, that is, $k_m=k'_1=d_2$. We let $P^{1,l}_{k_l}=v^{1,l}_{1}v^{1,l}_{2}\cdots v^{1,l}_{k_l}$ and similarly, $P^{2,l}_{k'_l}=v^{2,l}_{1}v^{2,l}_{2}\cdots v^{2,l}_{k'_l}$.

\underline{Case $\alpha_2\in\{\pi,\gamma\}$.} We have that $u_2\in P^{2,2}_{k'_2}$ with $v^{2,2}_{d_2}=u_2$.

\begin{itemize}
\item \underline{Subcase $\alpha_1\in \{\pi,\gamma, \mu,\sigma^+\}$.} Since we assume that there are no adjacent vertices of degree $3$ in each horizontal path of $T$, when $\alpha_2=\pi$, we have that $v_1\in P^{1,s}_{k_{s}}$, for some $s<m-1$ and that $|V(P^{1,m-1}_{k_{m-1}})|=|V(P^{1,m}_{k_m})|=d_2$. Thus, by identifying either the paths $P^{1,m-1}_{k_{m-1}}$, $P^{1,m}_{k_m}$ with the path $P^{2,1}_{k'_1}$ and the subpath of $P^{2,2}_{k'_2}$ defined by $v^{2,2}_1\ldots v^{2,2}_{d_2}$, respectively, when $\alpha_2=\pi$; or the path $P_{k_m}^{1,1}$ with $P_{k'_1}^{2,1}$, when $\alpha_2=\gamma$, we obtain a linear configuration of a tree $T_2^*$ in $\mathcal{H}_3$, which in view of (iii) has parameters $((h_1,h_2,t_3);(r_1,r_2);(a_1,a_2),(b_1,b_2))$.
\vspace{0.5cm}
\item \underline{Subcase $\alpha_1=\beta, \sigma^-$.} Notice that, since $d_2=a_2$ when $\alpha_2=\pi$,
we have that $v_1\in P^{1,s}_{k_{s}}$, for some $s<m$ and we also have that $|V(P^{1,m-1}_{k_{m-1}})|=|V(P^{1,m}_{k_m})|=d_2$. Thus, by identifying either the paths $P^{1,m-1}_{k_{m-1}}$, $P^{1,m}_{k_m}$ with the path $P^{2,1}_{k'_1}$ and the subpath of $P^{2,2}_{k'_2}$ defined by $v^{2,2}_1\ldots v^{2,2}_{d_2}$, respectively, when $\alpha_2=\pi$; or the path $P_{k_m}^{1,1}$ with $P_{k'_1}^{2,1}$, when $\alpha_2=\gamma$, we obtain a linear configuration of a tree $T_2^*$ in $\mathcal{H}_3$, which in view of (iii) has parameters $((h_1,h_2,t_3);(r_1,r_2);(a_1,a_2),(b_1,b_2))$.
\end{itemize}

\vspace{0.5cm}

\underline{Case $\alpha_2\in\{\beta,\sigma^+\}$.} By condition (iii), we have that $u_2\in P^{2,1}_{k'_1}$ with either $v^{2,1}_{a_2}=u_2$ or  $v^{2,1}_{h_2-a_2+1}=u_2$.
By identifying the path $P^{1,m}_{k_m}$ with the path $P^{2,1}_{k'_1}$ in such a way that the vertex $v^{1,m}_{1}$ corresponds to vertex $v^{2,2}_1$, we obtain a linear configuration of a tree $T_2^*$ in $\mathcal{H}_3$, which in view of (iii)  and (iv) has parameters $$((h_1,h_2,t_3);(r_1,r_2);(a_1,a_2),(b_1,b_2)).$$

\underline{Case $\alpha_2\in\{\mu,\sigma^-\}$.} By condition (iii), we have that $u_2\in P^{2,2}_{k_2}$. By identifying the path $P^{1,m}_{k_m}$ with the path $P^{2,1}_{k_1}$ in such a way that the vertex $v^{1,m}_{1}$ corresponds to vertex $v^{2,2}_1$, we obtain a linear configuration of a tree $T_2^*$ in $\mathcal{H}_3$, which in view of (iii) has parameters $$((h_1,h_2,t_3);(r_1,r_2);(a_1,a_2),(b_1,b_2)).$$

Thus, assume that we have obtained a linear configuration of a tree $T_i^*$ in $\mathcal{H}_{i+1}$, with parameters $(H'_{i+1};R_{i};A_{i},B_{i})$, where $h_j'=h_j$, for $j=1,2,\ldots, i$ and $h_{i+1}'=t_{i+1}$. Let $P^{1,1}_{k_1}, P^{1,2}_{k_2},\ldots,P^{1,m}_{k_m}$ and $P^{2,1}_{k'_1}, P^{2,2}_{k'_2},\ldots,P^{2,m'}_{k'_{m'}}$ be the paths defined by a linear configuration of $T_i^*$ and $T_{i+1}$, respectively, with $|V(P^{1,m}_{k_m})|=|V(P^{2,1}_{k'_1})|=d_i$, that is, $k_m=k'_1=d_i$. By adapting the process explained above, we get a linear configuration of a subgraph $T_{i+1}^*$ of $T$, which is of type $\mathcal{H}_{i+2}$, with parameters $(H'_{i+2};R_{i+1};A_{i+1},B_{i+1})$, where $h_j'=h_j$, for $j=1,2,\ldots, i+1$ and $h_{i+2}'=t_{i+2}$. Hence, repeating this process until we get $i=n-2$, we obtain a linear configuration of $T$. Therefore, by Theorem \ref{theo: plt iff a linear config. exists}, $T$ is a path-like tree.
\end{proof}

\subsection{Path-like trees of maximum degree $4$}\label{section: degree 4}
The study of path-like trees of degree $4$ can be reduced to the study of path-like trees of maximum degree $3$. The next lemma shows how this reduction can be performed. We denote by $\subgp{H}$ the subgraph induced by $H$.

\begin{lemma}
Let $T$ be a tree with a vertex $v$ of degree $4$. Denote by $T'_1, T'_2,T'_3$ and $T'_4$ the connected components of $T-v$. Then, $T$ is a path like tree if and only if there exists a permutation $\varphi\in \mathcal{S}_4$ such that,
\begin{itemize}
\item[(i)] $T_1=\subgp{T_{\varphi (1)}\cup\{v\}\cup T_{\varphi (2)}}$ and $T_2=\subgp{T_{\varphi (3)}\cup\{v\}\cup T_{\varphi (4)}}$ are path-like trees, and
\item[(ii)]  there exist a linear configuration of $T_1$ and a linear configuration of $T_2$, in which $v_1^{1,1}v_2^{1,1}\ldots v_{k_1}^{1,1}$ and $v_1^{2,1}v_2^{2,1}\ldots v_{k'1}^{2,1}$ are the first paths defined by the linear configuration of $T_1$ and $T_2$, respectively, such that $v_1^{1,1}=v$ and $v=v_1^{2,1}$.
\end{itemize}

\end{lemma}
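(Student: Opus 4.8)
The plan is to prove both implications by manipulating linear configurations and invoking Theorem \ref{theo: plt iff a linear config. exists}, which guarantees that a tree is path-like exactly when it admits a linear configuration. First I would record two structural facts about a linear configuration $P^1_{k_1},\ldots,P^m_{k_m}$ of a path-like tree. Since the only edges join consecutive paths and the graph is a tree, between each consecutive pair $P^l_{k_l},P^{l+1}_{k_{l+1}}$ there is \emph{exactly one} edge: at least one is forced by connectivity, and two would create a cycle. Secondly, a vertex of degree $4$ must be an internal vertex $v=v^l_p$ of some $P^l_{k_l}$ carrying one edge ``up'' to $P^{l+1}_{k_{l+1}}$ and one edge ``down'' to $P^{l-1}_{k_{l-1}}$ (condition (\ref{eq: condi_union_consec_paths}) permits at most one of each, and an endpoint of $P^l_{k_l}$ cannot reach degree $4$, since the first vertex admits no down-edge and the last admits no up-edge). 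Combining these, the single edge crossing the gap between positions $l$ and $l+1$ is the up-edge at $v$, so the four components of $T-v$ are exactly $C_{\mathrm{up}}=V(P^{l+1}_{k_{l+1}})\cup\cdots\cup V(P^n_{k_n})$, $C_{\mathrm{down}}=V(P^1_{k_1})\cup\cdots\cup V(P^{l-1}_{k_{l-1}})$, the right portion $C_{\mathrm{right}}=\{v^l_{p+1},\ldots,v^l_{k_l}\}$, and the left portion $C_{\mathrm{left}}=\{v^l_1,\ldots,v^l_{p-1}\}$. I would also need a \emph{reversal} fact: reversing the order of the paths and the vertices inside each path again yields a linear configuration of the same tree; this is a one-line check against (\ref{eq: condi_union_consec_paths}), where $d(v^l_i,v^l_{k_l})=d(v^{l+1}_1,v^{l+1}_j)$ translates into the required equality for the reversed labels and the constraint $j>1$ forces $i<k_l$, giving the analogous inequality after reversal.

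For the forward implication, assume $T$ is path-like and fix a linear configuration with $v=v^l_p$ as above. I take $T_1=\subgp{C_{\mathrm{right}}\cup\{v\}\cup C_{\mathrm{up}}}$ with configuration $\tilde P^1=(v,v^l_{p+1},\ldots,v^l_{k_l}),\,\tilde P^2=P^{l+1}_{k_{l+1}},\ldots$; this is precisely the ``forward half'' of the original configuration, so (\ref{eq: condi_union_consec_paths}) is inherited and $v$ is the first vertex of the first path. For $T_2=\subgp{C_{\mathrm{left}}\cup\{v\}\cup C_{\mathrm{down}}}$ I take the ``backward half'' and apply the reversal fact, after which $v$ again becomes the first vertex of the first path. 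This yields the required permutation $\varphi$ (pairing $C_{\mathrm{right}}$ with $C_{\mathrm{up}}$ and $C_{\mathrm{left}}$ with $C_{\mathrm{down}}$) and verifies (i) and (ii).

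For the converse, suppose (i) and (ii) hold, with configurations $C_1$ of $T_1$ and $C_2$ of $T_2$ each having $v$ as the first vertex of its first path. Because $v$ has exactly one neighbour in each of the two components inside $T_1$, its degree there is $2$, which forces the first path of $C_1$ to have length at least $2$ and $v$ to carry a single vertical edge; the same holds for $C_2$. I reverse $C_2$ so that $v$ becomes the last vertex of the last path, then concatenate it with $C_1$, merging the final path of the reversed $C_2$ (which ends at $v$) with the first path of $C_1$ (which starts at $v$) into a single horizontal path $P^*$ having $v$ internal. Since $V(T_1)\cap V(T_2)=\{v\}$ and $E(T_1)\cup E(T_2)=E(T)$, the resulting linear forest together with the inherited vertical edges is a configuration of $T$. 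By Theorem \ref{theo: plt iff a linear config. exists}, this makes $T$ a path-like tree.

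The main obstacle is the bookkeeping in this last step: checking that the merge at $v$ makes $v$ a genuine internal degree-$4$ vertex of $P^*$ with compatible up- and down-edges, and that the distance equalities of (\ref{eq: condi_union_consec_paths}) survive the concatenation. The key observation that resolves it is that the only edges requiring verification are the two incident to $P^*$: prepending vertices before $v$ leaves unchanged the distance from the \emph{end} of $P^*$ for the vertices between $v$ and that end, and appending vertices after $v$ leaves unchanged the distance from the \emph{start} of $P^*$ for the vertices between the start and $v$; hence (\ref{eq: condi_union_consec_paths}) for those two edges is inherited verbatim from $C_1$ and from the reversed $C_2$. Everything else then reduces to the two structural facts and the reversal check established at the outset.
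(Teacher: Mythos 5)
The paper states this lemma without any proof, so there is nothing to compare against; your argument is correct and is clearly the intended one: split the linear configuration of $T$ at the degree-$4$ vertex for necessity, and glue the reversed configuration of $T_2$ to that of $T_1$ at $v$ for sufficiency. Your two preliminary facts carry the whole proof and are both verified soundly --- exactly one edge between consecutive paths (by the edge count $\sum_i(k_i-1)$ versus $\sum_i k_i-1$, or the cycle argument) and invariance of condition~(\ref{eq: condi_union_consec_paths}) under reversal, where you correctly check that $j>1$ translates into $i<k_l$ --- and the final bookkeeping observation that only the two edges incident to the merged path $P^*$ need re-verification is exactly right, since the relevant distances to the start and end of $P^*$ are unchanged by the concatenation.
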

\section{Conclusion}
Path-like trees is a family of trees of maximum degree four, which has good properties in terms of labelings and also in terms of decompositions. In this paper, we have defined a new way of introducing them. Using this new way, we have characterized which trees of type $\mathcal{H}_{n}$, with no adjacent vertices of degree $3$ in each horizontal path are path-like trees. The problem that is still open consists on characterizing path-like trees in general, that is, without the restriction on the number of vertices of degree 3 or, on the non adjacency of vertices of degree $3$ that are in a horizontal path. It seems that these two constraints can be attacked by considering trees of type cutted $H$ (see Section \ref{subsection: cutted $H$}). However, it is not clear how we can associate the set of divisors (key point in the proof of Theorem \ref{theo: a tree in H_n such that...Then path-like tree_good}) in that case.

\section*{Acknowledgements}
The research conducted in this document by the first author  has been supported by the Spanish Research Council under project
MTM2014-60127-P and symbolically by the Catalan Research Council under grant 2014SGR1147.



\end{document}